\newlength\figureheight
\newlength\figurewidth
\newtheorem{theorem}{Theorem}[section]
\newtheorem{lemma}[theorem]{Lemma}
\theoremstyle{definition}
\newtheorem{definition}{Definition}[section]
\newtheorem{problem}{Problem}[section]
\title{Inverse Eigenvalue Problem and rational Krylov subspaces}
\providecommand{\keywords}[1]{\textit{Keywords: } #1}
\date{}
\author{ Niel Van Buggenhout\footnotemark[2]
	\and Marc Van Barel\footnotemark[2]
	\and Raf Vandebril\footnotemark[2]}
\begin{document}
	\maketitle

	\renewcommand{\thefootnote}{\fnsymbol{footnote}}
	
	\footnotetext[2]{Department of Computer Science, KU Leuven, University of Leuven, 3001 Leuven, Belgium. (niel.vanbuggenhout@kuleuven.be, marc.vanbarel@kuleuven.be, raf.vandebril@kuleuven.be)}
	\footnotetext{The research of the first author was funded by
		the Research Council KU Leuven,
		C1-project C14/17/073 (Numerical Linear Algebra and Polynomial Computations), project
		C14/16/056 (Inverse-free Rational Krylov Methods: Theory and Applications),
		the research of the second author by
		the Research Council KU Leuven, C1-project C14/17/073 (Numerical Linear Algebra and Polynomial Computations), by the Fund for Scientific Research–Flanders (Belgium), EOS Project no 30468160
		and the research of the third author by
		the Research Council KU Leuven, project
		C14/16/056 (Inverse-free Rational Krylov Methods: Theory and Applications
	)}
	
	\begin{abstract}
		The problem of computing recurrence coefficients of sequences of rational functions orthogonal with respect to a discrete inner product is formulated as an inverse eigenvalue problem for a pencil of Hessenberg matrices.
		Two procedures are proposed to solve this inverse eigenvalue problem, via the rational Arnoldi iteration and via an updating procedure using unitary similarity transformations.
		The latter is shown to be numerically stable.
		This problem and both procedures are generalized by considering biorthogonal rational functions with respect to a bilinear form.
		This leads to an inverse eigenvalue problem for a pencil of tridiagonal matrices.
		A tridiagonal pencil implies short recurrence relations for the biorthogonal rational functions, which is more efficient than the orthogonal case.
		However the procedures solving this problem must rely on nonunitary operations and might not be numerically stable.
	\end{abstract}
%\begin{keywords}
%	Rational Krylov, Biorthogonal, Short Recurrence, Oblique Projection, Matrix Pencil
%\end{keywords}
		\keywords{Orthogonal rational functions \and inverse eigenvalue problem \and rational Krylov subspaces}
		
\section{Introduction}\label{intro}
Rational functions are an important component for many numerical methods, e.g., in partial realization problems \cite{GrLi83} and model order reduction \cite{GaGrVD94,GaGrVD96}.
Orthogonal rational functions have attractive properties and are the solution to a least squares rational approximation problem \cite{BuVBVG04}.
This connection has been exploited for the scalar polynomial case \cite{ElGoKa91,BuVB95,Re91,ReAmGr91b} and the vector polynomial case \cite{BuVB95}.
The vector polynomial case can be interpreted as a rational least squares approximation where the poles of the rational functions are not given a priori.
The focus of this manuscript is on rational functions with prescribed poles.\\
Rational functions orthogonal with respect to a discrete inner product or, more general, a discrete bilinear form, are related to structured matrices. While Stieltjes-like or Lanczos-type procedures to construct orthogonal rational functions can suffer from numerical instability, procedures based on structured matrices are numerically stable without sacrificing efficiency \cite{Re91,ReAmGr91b}.\\
The methods proposed here are based on solving an inverse eigenvalue problem (IEP) \cite{Ch98,BoGo87}. For such problems, a matrix of a particular structure is constructed such that it has given spectral properties, a prescribed set of eigenvalues and first entries of normalized eigenvectors.
The IEPs presented and solved in this manuscript generalize the Jacobi matrix IEP \cite{GrHa84} which relates to orthogonal polynomials on the real line and the unitary Hessenberg IEP \cite{ReAmGr91b}, which relates to Szeg\H{o} polynomials \cite{Sz75}.
For orthogonal rational functions an IEP for semiseparable-plus-diagonal matrices \cite{BuVBVG04,VBFaGeMa05} can be formulated. 
Our approach uses a matrix pencil instead, which is a more intuitive representation and provides more flexibility. This flexibility allows the development of a procedure to solve the orthogonal problem that is numerically stable.\\
The former examples are orthogonal with respect to an inner product. 
We also discuss orthogonality with respect to bilinear forms. 
The tridiagonal pencil IEP presented here generalizes polynomials orthogonal to a bilinear form \cite{Re93} and the pseudo-Jacobi inverse eigenvalue problem \cite{XuBeCh19} which arises in the study of quantum mechanics. 
The IEP implies short recurrence relations for rational functions orthogonal with respect to this bilinear form \cite{BeDeZh10}, however the solution procedure must rely on nonunitary transformations and is therefore liable to numerical instability.
\\
Two procedures are proposed to solve the IEPs, one based on Krylov subspace methods and the other on an updating procedure.
Updating procedures start from an available solution of an IEP and efficiently compute the solution to an IEP where the discrete bilinear form or inner product underlying the orthogonal rational functions is enlarged by one node.
The origin of the proposed procedure can be traced back to continued fractions, where Rutishauser \cite{Ru63} used a similar approach to multiply J-fractions.
The core idea of his approach has been used to develop procedures for several inverse eigenvalue problems \cite{GrHa84,Re91,ReAmGr91b,BuVB95}.
Updating an inner product is closely related to a low rank modification of the Cholesky  decomposition of a positive-definite Hermitian matrix \cite{GiGoMuSa74}, or of the LR-factorization of a Hermitian matrix \cite{Be65} in case of a bilinear form.\\
Section \ref{sec:intro} introduces the necessary notions to formally define the main problem of generating sequences of orthogonal rational functions and provides two alternative ways to formulate this problem.
It is formulated as a structured inverse eigenvalue problem and in terms of the factorization of a moment matrix. 
The formulation as an inverse eigenvalue problem relies on the connection between orthogonal rational functions and structured matrices.
Section \ref{sec:sol_Krylov} uses the relation of structured matrices to rational Krylov subspace methods, to provide iterations to solve the main problem.
These iterations are the rational Arnoldi iteration \cite{Ru94} and the rational Lanczos iteration \cite{VBVBVa19}.
Solution procedures based on updating the IEP are introduced in Section \ref{sec:sol_update} which, in the orthogonal case, are numerically stable.
The generalization to the biorthogonal case is new and has the advantage of leading to short recurrence relations for the rational functions, however the procedure is no longer numerically stable.
Numerical experiments in Section \ref{sec:numerics} show that the proposed solution procedures are valid and show the numerical stability of the updating procedure in the orthogonal case.

\section{Problem definition and reformulation}\label{sec:intro}
The main problem considered is the generation of a sequence(s) of (bi)orthogonal rational functions with prescribed poles.
In order to discuss orthogonality, suitable spaces of rational functions, an inner product and a bilinear form must be defined.
Let $\mathcal{P}$ denote the space of polynomials and $\mathcal{P}_n$ the space of polynomials up to degree $n$. Then we define $\mathcal{R}$ to be the space of rational functions of the form
\begin{equation}
r(z) = \frac{p(z)}{q(z)}, \quad p(z), q(z)\in\mathcal{P}.
\end{equation}
A general definition of a bilinear form is given in Definition \ref{def:bilinearForm} and the specific form used here is given by Equation \eqref{eq:bilinear_form}.
\begin{definition}[Bilinear form]\label{def:bilinearForm}
	Let $\mathcal{X}$ and $\mathcal{Y}$ be vector spaces over the field of complex numbers $\mathbb{C}$.
	A bilinear form $\langle .,. \rangle: \mathcal{X} \times \mathcal{Y} \rightarrow \mathbb{C}$ is defined as a mapping which is
	\begin{itemize}
		\item[] linear in the elements of $\mathcal{X}$, i.e., $\langle \alpha_1 x_1 + \alpha_2 x_2, y_1\rangle = \alpha_1 \langle x_1,y_1\rangle + \alpha_2 \langle x_2,y_1\rangle$,
		\item[] linear in the elements of $\mathcal{Y}$, i.e., $\langle x_1, \alpha_1 y_1 + \alpha_2 y_2 \rangle = {\alpha}_1 \langle x_1, y_1 \rangle + {\alpha}_2 \langle x_1, y_2 \rangle$.
	\end{itemize}
\end{definition}
If $x\in \mathcal{X}$ and $y\in \mathcal{Y}$ satisfy $\langle x,y \rangle = 0$, then $x$ and $y$ are said to be orthogonal with respect to the bilinear form.
Set $\mathcal{X}=\mathcal{Y}=\mathcal{R}$ and consider weights $\{v_i\}_{i=1}^m$, $\{w_i \}_{i=1}^m$, with $v_i,w_i \in \mathbb{C}\backslash \{0\}$, and distinct nodes $\{z_i\}_{i=1}^m$, $z_i\in \mathbb{C}$, then the discrete bilinear form for $r,s\in \mathcal{R}$ under consideration is
\begin{equation}\label{eq:bilinear_form}
\langle r,s \rangle := \sum_{i=1}^{m} \bar{w}_i v_i {s}(z_i) r(z_i).
\end{equation}
The poles of the rational functions $r,s$ must be different from the nodes $z_i$.

Orthogonality with respect to a bilinear form leads to biorthogonal rational functions. 
Biorthogonality reduces to orthogonality when the bilinear form reduces to an inner product.
Definition \ref{def:innerProd} provides a formal definition of an inner product.
\begin{definition}[Inner product]\label{def:innerProd}
	Let $\mathcal{X}$ be a vector space over the field of complex numbers $\mathbb{C}$. 
	A map $(.,.): \mathcal{X}\times \mathcal{X} \rightarrow \mathbb{C}$ is called an \textit{inner product} if it is sesquilinear form, i.e., linear in its first and anti-linear in its second variable:
	\begin{itemize}
		\item[] $( \lambda x + \mu y,z ) = \lambda (x,z) + \mu (y,z) $,
		\item[] $(x, \lambda y + \mu z)  = \bar{\lambda} (x,y) + \bar{\mu} (x,z) $ for all $x,y,z\in \mathcal{X},\quad \lambda,\mu \in \mathbb{C}$.
	\end{itemize}
	Furthermore, it must be Hermitian
	\begin{equation*}
	(y,x) = \overline{(x,y)},
	\end{equation*}
	and positive definite
	\begin{equation*}
	(x,x) >0 \text{ for all }x\in \mathcal{X}, \quad x\neq0.
	\end{equation*}
\end{definition}
For discrete inner products the positive definiteness implies that the associated vector space must be finite dimensional.
A suitable $(n+1)$-dimensional vector space is $\mathcal{R}_n^\Xi \subset \mathcal{R}$, where $\Xi = \{\xi_i \}_{i=1}^{n}$, $\xi_i\in \bar{\mathbb{C}}:=\mathbb{C} \cup \{\infty \}$, is the set of prescribed poles.
A rational function $r \in \mathcal{R}_n^\Xi$ is defined as
\begin{equation}\label{eq:rationalFunctionPoles}
r(z) = \frac{p_n(z)}{q(z;\Xi)}, \quad q(z;\Xi) := \underset{\xi_i \neq \infty }{\prod_{i=1}^{n}}(z-\xi_i), \quad \deg(p_n(z)) \leq n.
\end{equation}
If $\Xi= \{\xi_i \}_{i=1}^{k}$, with $k>n$, then the first $n$ poles $\xi_i$, $i=1,2,\dots,n$, are used in the construction of the space $\mathcal{R}_n^\Xi$.
The discrete inner product for $r,s\in \mathcal{R}_n^{\Xi}$, with $n<m$, real weights $\{\alpha_i\}_{i=1}^m$, $\alpha_i \in \mathbb{R}\backslash \{0\}$, and distinct nodes $\{z_i\}_{i=1}^m$, $z_i\in \mathbb{C}$, is
\begin{equation}\label{eq:innerProduct}
(r,s) := \sum_{i=1}^{m} \alpha_ i \overline{s(z_i)} r(z_i).
\end{equation}
Again, the poles of $r,s$ must be distinct from the nodes and $\overline{s(z)} := \bar{s}(\bar{z})$. For polynomials $p(z) = \sum_{i=0}^{n}c_i z^i$, let $\bar{p}(z) := \sum_{i=0}^{n}\bar{c}_i z^i$ and, for rational functions $s(z) = \frac{p(z)}{q(z)}$, let $\bar{s}(z) := \frac{\bar{p}(z)}{\bar{q}(z)}$.
On $\mathcal{R}_k^\Xi$ with $k \geq m$ expression \eqref{eq:innerProduct} would be a Hermitian sequilinear form.\\
The spaces $\mathcal{R}$ and $\mathcal{R}_n^{\Xi}$ together with the forms $\langle .,. \rangle$ and $(.,.)$ allow a formal definition of the main problem.
This problem is the computation of nested (bi)orthogonal bases for nested subspaces of rational functions with prescribed poles. 
The problem is formulated more precisely in Problem \ref{def:functionalProblem_orthog} for orthogonal and in Problem \ref{def:functionalProblem_biorthog} for biorthogonal rational functions.
\begin{problem}[Generating orthogonal rational functions]\label{def:functionalProblem_orthog}
	For $n<m$, consider a discrete inner product \eqref{eq:innerProduct} and poles $\Xi = \{\xi_i \}_{i=1}^{n}$, $\xi_i \in \bar{\mathbb{C}}$. Construct a sequence of orthonormal rational functions $\{r_k\}_{k=0}^{n}$, $r_k\in \mathcal{R}^{\Xi}_k \backslash \mathcal{R}^{\Xi}_{k-1}$, orthonormal with respect to the given inner product, i.e.,
	\begin{equation*}
	( r_k,r_l) = \sum_{i=1}^m \alpha_i \overline{r_l(z_i)} r_k(z_i) = \delta_{k,l}.
	\end{equation*}
\end{problem}

\begin{problem}[Generating biorthogonal rational functions]\label{def:functionalProblem_biorthog}
	For $n<m$, consider a discrete bilinear form \eqref{eq:bilinear_form} and poles $\Xi = \{\xi_i \}_{i=1}^{n}$ and $\varPsi = \{\psi_i \}_{i=1}^{n}$, $\xi_i,\psi_i \in \bar{\mathbb{C}}$. Construct two sequences of rational functions $\{r_k\}_{k=0}^{n}$, $r_k\in \mathcal{R}^{\Xi}_k \backslash \mathcal{R}^{\Xi}_{k-1}$ and $\{s_k\}_{k=0}^{n}$, $s_k\in \mathcal{R}^{\varPsi}_k\backslash \mathcal{R}^{\varPsi}_{k-1}$ such that they are orthonormal to each other with respect to the given bilinear form, i.e., 
	\begin{equation*}
	\langle r_k,s_l\rangle = \sum_{i=1}^m \bar{w_i}v_i {s}_l(z_i) r_k(z_i) = \delta_{k,l}.
	\end{equation*}
\end{problem}

These problems can be formulated as problems in matrix theory, namely, inverse eigenvalue problems.
Section \ref{sec:relation} introduces rational Krylov subspaces, which are spaces spanned by matrix vector products. The associated inner product is the Euclidean inner product.
The relationship of such spaces together with the Euclidean inner product to the spaces and bilinear form discussed above allows the formulation of two structured inverse eigenvalue problems in Section~\ref{sec:IEP}.
Another possible formulation is given in Section \ref{sec:moment}, which discusses the problem of factoring a moment matrix.
This formulation serves as a theoretical tool to use in a uniqueness argument.

\subsection{Rational Krylov subspaces}\label{sec:relation}
Rational Krylov subspaces \cite{Ru84} are used to formulate Problem \ref{def:functionalProblem_orthog} and \ref{def:functionalProblem_biorthog} as problems in matrix theory. These subspaces are defined in Definition \ref{def:rks}.
In this definition a pole $\xi$ is denoted as a fraction $\frac{\nu}{\mu}$ to allow for a unified representation of $\xi\in \mathbb{C}$ and $\xi = \infty$.
To keep the notation concise, we will usually write the poles as $\xi$ and whenever relevant we will split them up in its fraction $\frac{\nu}{\mu}$. 
For any square matrices $B,C$ the notation $\frac{B}{C} := BC^{-1} = C^{-1}B$ and is only valid when $B$ and $C^{-1}$ commute, which is the case for matrices used in the rational Krylov subspace.
\begin{definition}[Rational Krylov subspace (RKS)]\label{def:rks}
	Consider $A \in \mathbb{C}^{m\times m}$, $v\in \mathbb{C}^m$ and $\Xi = \{\xi_i \}_{i=1}^{n-1}$, then a rational Krylov subspace with poles $\xi_i = \frac{\nu_i}{\mu_i}\in \bar{\mathbb{C}}$ and shift parameters $\rho_i, \eta_i\in\mathbb{C}$ is defined as
	\begin{equation*}
	\mathcal{K}_n(A,v;\Xi):=  \textrm{span}\left\{v, \frac{\rho_1 A-\eta_1 I}{\mu_1 A-\nu_1 I} t_1,\dots , \frac{\rho_{n-1} A-\eta_{n-1} I}{\mu_{n-1} A-\nu_{n-1} I} t_{n-1} \right\}.
	\end{equation*}
\end{definition}	
\noindent The parameters $\rho_i, \eta_i, \mu_i, \nu_i$ must satisfy $\frac{\eta_i}{\rho_i} \neq \frac{\nu_i}{\mu_i}$ and the continuation vectors $t_i\in\mathcal{K}_i(A,v;\Xi)$, $i = 1,2,\dots, n-1$, must be chosen such that $\dim(\mathcal{K}_n(A,v;\Xi)) = n$ for all $n \leq m$.
In general this statement should be more precise, since one can find invariant subspaces, i.e., $\dim(\mathcal{K}_n(A,v;\Xi)) < n\leq m$, e.g., the minimal polynomial \cite[p.19]{LiSt13}. 
However in this manuscript such scenarios cannot occur because of conditions that will be imposed on $A$ and $v$. \\
Vectors $k_i$ spanning a RKS, $\mathcal{K}_n(A,v;\Xi) = \textrm{span}\{k_1, k_2,\dots, k_{n}\},$ can be written in terms of rational functions  $r_i^\Xi\in  \mathcal{R}^{\Xi}_i$, i.e., $k_i = r_{i-1}^\Xi (A)v$.
The inner product for vectors in rational Krylov subspaces $\mathcal{K}_n(A,v;\Xi)\subseteq \mathbb{C}^m$ is the usual inner product on $\mathbb{C}^m$, i.e., for $x,y\in \mathbb{C}^m$,
\begin{equation}\label{eq:vecInnerProduct}
\langle x,y \rangle := y^H x.
\end{equation}
Our interest goes out to diagonal matrices $A=\textrm{diag}(\{z_i\}_{i=1}^m)$, where $z_i$ are the nodes of some discrete bilinear form or inner product for rational functions.
Lemma~\ref{lemma:bilinearForms_orthog} provides an equivalence relation between inner product \eqref{eq:vecInnerProduct} and inner product \eqref{eq:innerProduct} and Lemma \ref{lemma:bilinearForms_biorthog} between inner product \eqref{eq:vecInnerProduct} and bilinear form \eqref{eq:bilinear_form}.
For simplicity these are formulated in terms of diagonal matrices, however, they hold for more general matrices.

\begin{lemma}\label{lemma:bilinearForms_orthog}
	Consider inner product \eqref{eq:innerProduct} on the space $\mathcal{R}_{n-1}^\Xi$, with nodes $\{z_i\}_{i=1}^m \in \mathbb{C}$, $z_i\neq z_j$ for $i\neq j$, and weights $\{\alpha_i\}_{i=1}^{m}\in \mathbb{R}\backslash \{0\}$. Let $Z = \textrm{diag}\{z_1,\dots,z_m\}$ and $v=\begin{bmatrix}
	v_1 & \dots & v_m
	\end{bmatrix}^\top$, $v_i\in \mathbb{C}\backslash \{0\}$.
	Then, this inner product $( .,. ): \mathcal{R}_{n-1}^\Xi\times \mathcal{R}_{n-1}^\Xi\rightarrow \mathbb{C}$, with $\alpha_i=\vert v_i\vert^2$, and, the inner product \eqref{eq:vecInnerProduct} restricted to rational Krylov subspaces, $\langle .,. \rangle: \mathcal{K}_n(Z,v;\Xi)\times \mathcal{K}_n(Z,v;\Xi)\rightarrow \mathbb{C}$, are the same.
\end{lemma}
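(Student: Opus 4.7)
The plan is to exhibit an explicit isomorphism $\Phi : \mathcal{R}_{n-1}^\Xi \to \mathcal{K}_n(Z,v;\Xi)$ given by $r \mapsto r(Z)v$, and then verify by direct computation that $\Phi$ carries the inner product \eqref{eq:innerProduct} to the Euclidean inner product \eqref{eq:vecInnerProduct}. The key input, already noted in the paragraph preceding the lemma, is that any vector in $\mathcal{K}_n(Z,v;\Xi)$ can be written as $r(Z)v$ for some $r \in \mathcal{R}_{n-1}^\Xi$; conversely, every such $r(Z)v$ lies in the subspace. Because $Z$ is diagonal with distinct entries $z_i$ and $v$ has nonzero components (and the poles of $r$ avoid the $z_i$), the matrix $r(Z) = \textrm{diag}(r(z_1),\dots,r(z_m))$ is well defined and $r(Z)v$ has entries $r(z_i)v_i$.

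Next I would check that $\Phi$ is a bijection between the two $n$-dimensional spaces. Injectivity follows because $r(Z)v = 0$ forces $r(z_i)v_i = 0$ for all $i$, so $r(z_i) = 0$ at $m > n-1$ distinct points, which for $r \in \mathcal{R}_{n-1}^\Xi$ (whose numerator has degree at most $n-1$) forces $r \equiv 0$. Surjectivity then follows since both spaces have dimension $n$ (the RKS dimension is exactly $n$ under the standing conditions on $A,v$).

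The heart of the proof is then a one-line computation: for $r,s \in \mathcal{R}_{n-1}^\Xi$,
\begin{equation*}
\langle \Phi(r), \Phi(s) \rangle = (s(Z)v)^H (r(Z)v) = \sum_{i=1}^{m} \overline{s(z_i)v_i}\, r(z_i)v_i = \sum_{i=1}^{m} |v_i|^2 \overline{s(z_i)} r(z_i),
\end{equation*}
which, under the identification $\alpha_i = |v_i|^2$, is exactly $(r,s)$ from \eqref{eq:innerProduct}. Combined with the bijection $\Phi$, this shows the two inner products coincide.

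The only real obstacle is a bookkeeping one: making precise what ``the same inner product'' means when the two forms are defined on different-looking spaces. I would handle this by stating the equivalence as $\langle \Phi(r),\Phi(s)\rangle = (r,s)$ for all $r,s \in \mathcal{R}_{n-1}^\Xi$, with $\Phi$ a linear isomorphism, and relying on Definition \ref{def:rks} together with the RKS-to-rational-function correspondence already recorded in the paper. No new machinery is needed beyond this isomorphism and the evaluation $r(Z) = \textrm{diag}(r(z_i))$.
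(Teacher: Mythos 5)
Your proposal is correct and follows essentially the same route as the paper: identify each vector of $\mathcal{K}_n(Z,v;\Xi)$ as $r(Z)v$ with $r\in\mathcal{R}_{n-1}^\Xi$ and compute $\langle r(Z)v,s(Z)v\rangle=\sum_{i=1}^m |v_i|^2\,\overline{s(z_i)}r(z_i)=(r,s)$ with $\alpha_i=|v_i|^2$. The extra verification that the correspondence $r\mapsto r(Z)v$ is a bijection is a harmless addition the paper leaves implicit.
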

\begin{proof}
	Consider $x,y\in \mathcal{K}_n(Z,v;\Xi)$, then there exist $r,s \in \mathcal{R}^\Xi_{n-1}$ such that $x = r(Z)v$ and $y = s(Z)v$.
	Apply \eqref{eq:vecInnerProduct} for $x,y$ and the statement follows for $\alpha_i = \vert v_i\vert ^2$,
	\begin{equation*}
	\langle r(Z)v,s(Z)v \rangle  = v^H\bar{s}(Z^H)r(Z)v 
	= \sum_{i=1}^m \bar{v}_i v_i \overline{s(z_i)}r(z_i) = (r,s).
	\end{equation*}\qed
\end{proof}

\begin{lemma}\label{lemma:bilinearForms_biorthog}
	Consider bilinear form \eqref{eq:bilinear_form} on the space $\mathcal{R}$, with nodes $\{z_i\}_{i=1}^m \in \mathbb{C}$, $z_i\neq z_j$ for $i\neq j$, and weights $\{v_i\}_{i=1}^{m}$, $\{w_i\}_{i=1}^{m}$, $v_i,w_i \in \mathbb{C}\backslash \{0\}$. Let $Z = \textrm{diag}\{z_1,\dots,z_m\}$ and $v=\begin{bmatrix}
	v_1 & \dots & v_m
	\end{bmatrix}^\top$, $w=\begin{bmatrix}
	w_1 & \dots & w_m
	\end{bmatrix}^\top$.
	Then, the bilinear form \eqref{eq:bilinear_form} restricted to rational functions with prescribed sets of poles $\Xi,\varPsi$, i.e., $\langle .,. \rangle: \mathcal{R}_{n-1}^\Xi\times \mathcal{R}_{n-1}^\varPsi\rightarrow \mathbb{C}$ and, the inner product \eqref{eq:vecInnerProduct} restricted to rational Krylov subspaces, $\langle .,. \rangle: \mathcal{K}_n(Z,v;\Xi)\times \mathcal{K}_n(Z^H,w;\varPsi)\rightarrow \mathbb{C}$, are the same.
\end{lemma}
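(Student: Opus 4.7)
The plan is to adapt the proof of Lemma \ref{lemma:bilinearForms_orthog} to the bilinear setting, where two distinct rational Krylov subspaces and a nonsesquilinear pairing replace the single subspace and inner product. The extra care needed concerns the identification of a vector $y\in \mathcal{K}_n(Z^H,w;\varPsi)$ with the rational function $s\in \mathcal{R}_{n-1}^\varPsi$ appearing in the bilinear form, since the Hermitian vector inner product $y^H x$ contains a conjugation that the bilinear form $\langle r,s\rangle$ does not.

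First I would pick arbitrary $x\in \mathcal{K}_n(Z,v;\Xi)$ and $y\in\mathcal{K}_n(Z^H,w;\varPsi)$. By the standard parametrization of rational Krylov subspaces (vectors are obtained by applying rational functions with the prescribed poles to the seed), there is an $r\in\mathcal{R}_{n-1}^\Xi$ with $x=r(Z)v$; on the dual side, I would write $y=\bar{s}(Z^H)w$ and let this equation define the rational function $s\in\mathcal{R}_{n-1}^\varPsi$ associated with $y$. Pairing $y$ with $\bar{s}$ rather than with $s$ itself is precisely what will cancel the conjugation introduced by $(\cdot)^H$ on the vector side.

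Next I would exploit that $Z=\textrm{diag}(z_1,\dots,z_m)$ and hence $Z^H=\bar{Z}$ are diagonal. Then $\bar{s}(Z^H)$ is the diagonal matrix with entries $\bar{s}(\bar{z}_i)$, and its conjugate transpose $(\bar{s}(Z^H))^H$ has diagonal entries $\overline{\bar{s}(\bar{z}_i)}=s(z_i)$, by the conventions $\bar{s}(z)=\bar{p}(z)/\bar{q}(z)$ and $\overline{s(z)}=\bar{s}(\bar{z})$ introduced after \eqref{eq:innerProduct}. Multiplying out,
\begin{equation*}
y^H x \;=\; w^H\, (\bar{s}(Z^H))^H\, r(Z)\, v \;=\; \sum_{i=1}^{m}\bar{w}_i\, v_i\, s(z_i)\, r(z_i) \;=\; \langle r,s\rangle,
\end{equation*}
which is the desired identity.

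The main subtlety I expect is not the one-line computation above but this identification $y\leftrightarrow \bar{s}$: in Lemma \ref{lemma:bilinearForms_orthog} the sesquilinear structure hides the issue because a single rational function $s$ plays both roles in the inner product, whereas here one has to deliberately place $\bar{s}$ (not $s$) into the Krylov representation of $y$ so that the bilinear form comes out with $s(z_i)$ rather than $\bar{s}(z_i)$. Once this convention is fixed, the fact that the map $s\mapsto \bar{s}(Z^H)w$ is a bijection between $\mathcal{R}_{n-1}^\varPsi$ and $\mathcal{K}_n(Z^H,w;\varPsi)$ is immediate from the same rational-function parametrization used for $x$.
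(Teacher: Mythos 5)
Your proof is correct and is essentially the paper's own argument: the same one-line computation $y^Hx = w^H(\cdot)\,r(Z)v$ exploiting the diagonality of $Z$, with the only difference being where the conjugation bar sits in the parametrization of $y$ (you write $y=\bar{s}(Z^H)w$ so the result reads $\langle r,s\rangle$, whereas the paper writes $y=s(Z^H)w$ and its computation ends with $\langle r,\bar{s}\rangle$). The one caveat is your closing assertion that this $s$ lies in $\mathcal{R}_{n-1}^{\varPsi}$: since $y=u(Z^H)w$ with $u\in\mathcal{R}_{n-1}^{\varPsi}$ forces $s=\bar{u}\in\mathcal{R}_{n-1}^{\bar{\varPsi}}$, that identification is literally valid only when $\varPsi$ is closed under conjugation — but the paper's statement and proof gloss over exactly the same conjugation bookkeeping (its final expression $\langle r,\bar{s}\rangle$ has second argument with poles $\bar{\varPsi}$), so this is a shared notational wrinkle rather than a gap in your argument.
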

\begin{proof}
	Consider $x\in \mathcal{K}_n(Z,v;\Xi)$ and $y\in \mathcal{K}_n(Z^H,w;\varPsi)$, then there exist $r \in \mathcal{R}^\Xi_{n-1}$ and $s \in \mathcal{R}^\varPsi_{n-1}$ such that $x = r(Z)v$ and $y = s(Z^H)w$.
	Apply \eqref{eq:vecInnerProduct} for $x,y$ and the statement follows
	\begin{equation*}
	\langle r(Z)v,s(Z^H)w \rangle  = w^H\bar{s}(Z)r(Z)v 
	= \sum_{i=1}^m \bar{w}_i v_i \bar{s}(z_i)r(z_i) = \langle r,\bar{s}\rangle.
	\end{equation*}\qed
\end{proof}
The consequence of Lemma \ref{lemma:bilinearForms_orthog} (and Lemma \ref{lemma:bilinearForms_biorthog}) is that the recurrence coefficients that generate orthogonal (biorthogonal) vectors in some RKS can be directly used to construct orthogonal (biorthogonal) rational functions. These functions will be orthogonal with respect to a discrete inner product (bilinear form) that is composed of the eigenvalues of $Z$ and elements of $v$, which make up the RKS.
And exactly this connection is exploited by inverse eigenvalue problems.
Remark that the restriction of inner product \eqref{eq:vecInnerProduct} in Lemma \ref{lemma:bilinearForms_biorthog} is no longer an inner product on the restricted domain.

\subsection{Inverse eigenvalue problems}\label{sec:IEP}
Problem \ref{def:functionalProblem_orthog} and \ref{def:functionalProblem_biorthog} can be formulated as a problem in matrix theory using Lemma~\ref{lemma:bilinearForms_orthog} and \ref{lemma:bilinearForms_biorthog}, respectively. They take the form of inverse eigenvalue problems.
The considered structured inverse eigenvalue problem is provided in Problem \ref{def:SIEP}.
\begin{problem}[Structured IEP]\label{def:SIEP}
	Given nodes $\{z_1,\dots,z_m \}$, $z_i\in \mathbb{C}$, find a matrix $B\in \mathcal{N}$ such that the spectrum of $B$ equals the given nodes, i.e., $\sigma(B) = \{z_1,\dots,z_m  \}$. 
\end{problem}
Throughout this section, $\mathcal{N} = \{N_i \}_i$ denotes the set of matrices with a particular structure. Structure can be imposed by sparsity conditions, i.e., certain elements of $N_i$ must be zero, or rank conditions on submatrices of $N_i$. 
Examples of the former are Hessenberg matrices, tridiagonal matrices and of the latter are semiseperable matrices, which have lower-and upper triangular parts of rank one. 
Additional conditions such as symmetry or positive definiteness can also be imposed, e.g., a Jacobi matrix \cite{GrHa84}.
Besides the spectrum, additional spectral data is usually provided in Problem \ref{def:SIEP}.
In this manuscript the additional spectral data provided is information about the eigenvectors. Let $e_i$ denote the $i$th canonical unit vector of appropriate size, Problem \ref{def:SIEP_matrix} formulates the general IEP used throughout this manuscript.
\begin{problem}[Structured IEP-matrix formulation]\label{def:SIEP_matrix}
	Given a diagonal matrix $Z = \textrm{diag}(z_1,\dots, z_m)\in \mathbb{C}^{m\times m}$, $z_i\neq z_j$ for $i\neq j$, and vectors $v,w\in \mathbb{C}^m$. 
	Find matrix $B\in \mathcal{N}$ and $V\in \mathbb{C}^{m\times m}$, where $V e_1 = \frac{v}{\nu}$ and $V^{-H}e_1 = \frac{w}{\eta}$, with $\bar{\eta} \nu = \langle v,w\rangle$, such that
	\begin{equation}\label{eq:similar}
	V^{-1}ZV = B.
	\end{equation}
\end{problem}
Since $V^{-1} Z = B V^{-1}$, $B$ is said to represent $Z$ in the basis for $\mathbb{C}^{m}$ formed by the columns of $V$. Another interpretation, more natural for the equation $ZV = VB$, is that $B$ contains the recurrence coefficients for constructing the columns of $V$.
The connection between the spaces of rational functions and rational Krylov subspaces, as described in Section \ref{sec:relation} allows the formulation of two IEPs which are equivalent to the problem of constructing sequences of (bi)orthogonal rational functions. These IEPs enforce the structure
\begin{itemize}
	\item $\mathcal{N}_1 = \{\text{Hessenberg matrix pencils} \}$,
	\item $\mathcal{N}_2 = \{\text{Tridiagonal matrix pencils} \}$,
\end{itemize}
and correspond, respectively, to Problem \ref{def:functionalProblem_orthog} and \ref{def:functionalProblem_biorthog}.
These equivalent problems are formulated in Problem \ref{def:HPIEP} and \ref{def:TPIEP}.
Denote by $a_{i,j}$ the element at row $i$ and column $j$ of some matrix $A$.
\begin{problem}[Hessenberg pencil IEP (HPIEP)]\label{def:HPIEP}
	Consider a diagonal matrix $Z =\textrm{diag}(z_1,\dots, z_m)$ of distinct nodes $z_i\in \mathbb{C}$, a vector of weights $v\in \mathbb{C}^m$ and a set of poles $\Xi = \{\xi_i\}_{i=1}^{m-1}$, $\xi_i \in \overline{\mathbb{C}}$. Construct a Hessenberg pencil $(H,K)\in\mathbb{C}^{m\times m}$, with $\frac{h_{i+1,i}}{k_{i+1,i}} = \xi_i$, $i=1,2,\dots, m-1$, and a unitary matrix $Q\in \mathbb{C}^{m\times m}$ such that
	\begin{equation}
	Q^H Z Q K = H \quad \text{and}\quad Q e_1  = \frac{v}{\Vert v \Vert}.
	\end{equation}
\end{problem}
This problem implies, by the rational implicit Q theorem \cite[Theorem 5.1]{CaMeVa19_QZ}, that the column span of $Q$ equals $\mathcal{K}_{m}(Z,v;\Xi)$. In this theorem the key is to note the correspondence between the poles in the Krylov subspace and the ratio of the subdiagonal elements of the Hessenberg pencil. 
Solving this IEP corresponds to computing recurrence coefficients of a sequence of orthogonal rational functions, orthogonal with respect to a discrete inner product on $\mathcal{R}_{m-1}^\Xi$. 
These rational functions form a nested basis for $\mathcal{R}^\Xi_{m-1}$. Hence, they form a solution to Problem \ref{def:functionalProblem_orthog}.

\begin{problem}[Tridiagonal pencil IEP (TPIEP)]\label{def:TPIEP}
	Consider a diagonal matrix $Z =\textrm{diag}(z_1,\dots, z_m)$ of distinct nodes $z_i\in \mathbb{C}$, two vectors containing weights $v,w\in \mathbb{C}^m$, $\langle v,w \rangle \neq 0$, and sets of poles $\Xi = \{\xi_i\}_{i=1}^{m-1}$ and $\varPsi = \{\psi_i \}_{i=1}^{m-2}$, $\xi_i, \psi_i \in \overline{\mathbb{C}}$. Construct a tridiagonal pencil $(T,S)\in\mathbb{C}^{m\times m}$, with $\frac{t_{i,i+1}}{s_{i,i+1}} = \xi_i$, $i=1,2,\dots, m-1$ and $\frac{t_{i+1,i}}{s_{i+1,i}} = \bar{\psi}_{i-1}$, $i=2,3,\dots, m-1$ and matrices $V,W\in \mathbb{C}^{m\times m}$ such that
	\begin{equation}
	W^H Z V S = T \quad \text{and}\quad V e_1 = \frac{v}{\nu}, \quad We_1 = \frac{w}{\eta},
	\end{equation}
	where $W^H V = I$ (biorthogonality) and $w^H v = \nu \bar{\eta}$.
\end{problem}
Thus a tridiagonal pencil $(T,S)$, where both the subdiagonal and superdiagonal elements should satisfy some restrictions on their ratios, must be constructed.
These ratio restrictions guarantee that the corresponding rational Krylov subspaces have the appropriate poles \cite{VBVBVa19}. 
Solving this IEP corresponds to computing recurrence coefficients of one of two sequences of biorthogonal rational functions, orthogonal with respect to a discrete bilinear form.
Hence, it means solving Problem \ref{def:functionalProblem_biorthog}. 
Note that the tridiagonal pencil $(T,S)$ only provides recurrence coefficients for one of the sequences of biorthogonal rational functions. 
To obtain the other sequence, an additional tridiagonal pencil $(\widetilde{T},\widetilde{S})$ must be constructed such that $V^H Z^H W \widetilde{S} = \widetilde{T}$.

\subsection{Moment matrix formulation}\label{sec:moment}
The essence of the problems defined above is the construction of nested biorthogonal bases for some given vector spaces. 
This can also be described using moment matrices.
This moment matrix formulation will allow us, by a uniqueness argument, to show that the procedures proposed to solve the IEPs, introduced in Section \ref{sec:sol_Krylov} and \ref{sec:sol_update}, indeed solve the correct problem.
The moment matrix $M\in \mathbb{C}^{n\times n}$, associated to two sets $X = \{x_{i}\}_{i=1}^n, Y=\{y_{i}\}_{i=1}^n$ of vectors in some vector spaces $\mathcal{X},\mathcal{Y}$ and a bilinear form $\langle .,. \rangle: \mathcal{X}\times \mathcal{Y} \rightarrow \mathbb{C}$, is defined as
\begin{equation}\label{eq:momentMatrix_bilinearForm}
M:=\begin{bmatrix}
\langle x_{i},y_{j}  \rangle
\end{bmatrix}_{i,j=1}^n.
\end{equation}
Lemma \ref{theorem:momentLR} states how the sets $X,Y$ can be orthogonalized using a factorization of the moment matrix $M$.
\begin{lemma}[Biorthonormal vectors via moment matrix factorization\cite{Da75}]\label{theorem:momentLR}
	Let $X = \begin{bmatrix}
	x_1 & x_2 & \dots & x_n
	\end{bmatrix}$ and $Y = \begin{bmatrix}
	y_1 & y_2 & \dots & y_n
	\end{bmatrix}$ denote matrices containing linearly independent vectors in some vector spaces $\mathcal{X}$ and $\mathcal{Y}$, respectively. Let $M$ be the associated moment matrix \eqref{eq:momentMatrix_bilinearForm}, which is assumed to be strongly nonsingular.
	Then the (non-pivoted) LR factorization is $M=LR$, with $L$ a lower and $R$ an upper triangular matrix. These factors allow the construction of biorthonormal sets of vectors $\{ v_i\},\{w_i\}$ whose span equals that of $\{x_i\}, \{y_i\}$.
	Set $V = 
	\begin{bmatrix}
	v_1 & v_2 & \dots & v_n
	\end{bmatrix} := X R^{-1}$ and $W  = \begin{bmatrix}
	w_1 & w_2 & \dots & w_n
	\end{bmatrix} := Y L^{-\top}$, then these sets are nested
	\begin{align*}
	\textrm{span}\{v_1,\dots, v_i \} &= \textrm{span}\{x_1,\dots, x_i\}\\
	\textrm{span}\{w_1,\dots, w_i \} &= \textrm{span}\{y_1,\dots, y_i \}, \quad i=1,\dots, n,
	\end{align*}
	and biorthonormal
	\begin{align*}
	\begin{bmatrix}
	\langle v_{i},w_{j}  \rangle
	\end{bmatrix}_{i,j=1}^n &= L^{-1} M R^{-1} = I.
	\end{align*}
\end{lemma}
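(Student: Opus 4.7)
The plan is to establish the two claims of the lemma — nestedness of the spans and biorthonormality — separately, by direct manipulation of the definitions together with the factorization $M=LR$ and bilinearity of $\langle\cdot,\cdot\rangle$.

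For nestedness, I would first note that strong nonsingularity of $M$ is exactly the hypothesis needed to guarantee that the non-pivoted LR factorization exists with both triangular factors having nonzero diagonal entries. Consequently $R^{-1}$ is upper triangular with nonzero diagonal, so
$v_j = XR^{-1}e_j = \sum_{k \leq j}(R^{-1})_{kj}\, x_k$,
with $(R^{-1})_{jj}\neq 0$. Linear independence of the $x_k$ then yields, by induction on $j$ (or a leading-$j$ dimension count), $\textrm{span}\{v_1,\ldots,v_j\}=\textrm{span}\{x_1,\ldots,x_j\}$. The $W = YL^{-\top}$ side is entirely analogous: $L^{-\top}$ is upper triangular with nonzero diagonal, so the same argument applies.

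For biorthonormality, I would expand
\begin{equation*}
\langle v_i,w_j\rangle = \Big\langle \sum_k (R^{-1})_{ki}\,x_k,\ \sum_l (L^{-\top})_{lj}\,y_l\Big\rangle = \sum_{k,l} (R^{-1})_{ki}\,(L^{-\top})_{lj}\, M_{kl},
\end{equation*}
using bilinearity in each argument. Rewriting the resulting double sum as a matrix product yields the Gram matrix as a product of the inverse LR factors with $M$ sandwiched between them, and substituting $M=LR$ telescopes the expression to $I$, which is exactly the claim $[\langle v_i,w_j\rangle]_{i,j=1}^{n} = L^{-1} M R^{-1} = I$.

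The main obstacle I anticipate is pure bookkeeping: keeping track of the order and transposition of the factors when passing from the entry-wise double sum to a matrix product, so that the result lines up as $L^{-1}MR^{-1}$ rather than its transpose. Existence of the LR decomposition with the required diagonal non-vanishing from strong nonsingularity is standard, but deserves a one-line justification for completeness.
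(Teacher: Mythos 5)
The paper itself gives no proof of this lemma: it is imported from \cite{Da75}, and the only justification offered in the text is the one-line remark that nestedness follows from the upper triangular structure of $R^{-1}$ and $L^{-\top}$. Your argument is therefore necessarily fuller than the source, and its overall route --- existence of the non-pivoted LR factorization with nonvanishing diagonals from strong nonsingularity, nestedness from triangularity plus linear independence of the $x_k$ (resp.\ $y_l$), and biorthonormality by expanding the bilinear form and reassembling the double sum as a triple matrix product --- is the standard and correct one.

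The one point you defer as ``pure bookkeeping'' is, however, exactly where a literal reading of the statement bites, and as you have set it up the final step would not go through. With $M_{kl}=\langle x_k,y_l\rangle$ and your coefficients $(R^{-1})_{ki}$, $(L^{-\top})_{lj}$, the double sum assembles to $\bigl(R^{-\top}M L^{-\top}\bigr)_{ij}$, and substituting $M=LR$ gives $R^{-\top}LRL^{-\top}$, which is not the identity unless $M$ is symmetric; nothing telescopes. The claimed identity $L^{-1}MR^{-1}=I$ emerges only under the index convention the paper actually uses later (compare $M=K_w^HK_v$ in \eqref{eq:momentMatrix} and $W^HV=R_W^HMR_V$ in the proof of Theorem \ref{theorem:uniqueBases}), namely that the row index of these bracket matrices corresponds to the second argument of the form; equivalently, under the literal convention one must swap the roles of the factors and set $V=XL^{-\top}$, $W=YR^{-1}$. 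So to complete your proof you must either state and use this transposed bracket convention or interchange which triangular factor multiplies $X$ and which multiplies $Y$; with that fixed, the telescoping $L^{-1}(LR)R^{-1}=I$ is immediate. A final small remark: the lemma is stated for a genuine bilinear form, so your use of linearity in both arguments is legitimate, but in the inner-product variant mentioned after the lemma the factor $L^{-\top}$ becomes $L^{-H}$ and the expansion acquires conjugates accordingly.
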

The nestedness of the sets $V,W$ follows from the upper triangular structure of $R^{-1}$ and $L^{-\top}$. 
Note that the moment matrix can also be constructed for an inner product, then $L^{-\top}$ should be replaced by $L^{-H}$ in the above lemma.
For an inner product and $X=Y$ the moment matrix is Hermitian and positive definite. Hence, the Cholesky decomposition can be used to factor $M$.\\
In this manuscript the spaces considered are spaces of rational functions with prescribed poles or rational Krylov subspaces. 
Moment matrices arising from rational Krylov subspaces can be elegantly represented.\\
Consider $Z=\textrm{diag}(\{z_i\}_{i=1}^m)$, with distinct elements, and rational Krylov subspaces $\mathcal{K}_n(Z,v;\Xi)$, $\mathcal{K}_n(Z^H,w;\Psi)$,
which, respectively, have nested bases
\begin{align}
K_v& = \begin{bmatrix}
t_0(Z)v & t_1(Z)v & \dots &  t_{n-1}(Z)v
\end{bmatrix},\label{eq:KrylovBasisV}\\
K_w &= \begin{bmatrix}
u_0(Z^H)w & u_1(Z^H)w & \dots & u_{n-1}(Z^H)w
\end{bmatrix} \label{eq:KrylovBasisW},
\end{align}
where $t_i\in \mathcal{R}_i^\Xi$ and $u_i\in \mathcal{R}_i^\varPsi$. They will be referred to as \emph{Krylov bases}.
Using \eqref{eq:vecInnerProduct}, we can construct the following two moment matrices. 
The first moment matrix
\begin{equation*}
K_v^H K_v = \begin{bmatrix}
\langle t_i(Z)v,t_j(Z)v \rangle
\end{bmatrix}_{i,j=0}^{n-1}
\end{equation*}
can be interpreted by Lemma \ref{lemma:bilinearForms_orthog} as a moment matrix generated by an inner product of the form \eqref{eq:innerProduct} for a sequence of rational functions $\{t_i(z)\}_i$, $t_i\in \mathcal{R}_i^\Xi$.
The second moment matrix
\begin{equation}\label{eq:momentMatrix}
K_w^H K_v = \begin{bmatrix}
\langle t_i(Z)v,u_j(Z^H)w \rangle
\end{bmatrix}_{i,j=0}^{n-1},
\end{equation}
is related, by Lemma \ref{lemma:bilinearForms_biorthog}, to the moment matrix for bilinear form \eqref{eq:bilinear_form} generated by rational functions $\{t_i(z)\}_i$, $t_i\in \mathcal{R}_i^\Xi$ and $\{u_j(z)\}_j$, $u_j \in \mathcal{R}_j^\varPsi$.\\
Thus essentially we are looking for a numerical procedure to (implicitly) compute the LR factorization of a moment matrix.
The added difficulty in Problem \ref{def:HPIEP} and \ref{def:TPIEP} is that the structure of the matrices containing the recurrence coefficients must adhere to some conditions, i.e., they must be a Hessenberg and tridiagonal pencil, respectively. Two procedures to solve these IEPs are proposed in Section \ref{sec:sol_Krylov} and \ref{sec:sol_update}.

\section{Krylov subspace methods}\label{sec:sol_Krylov}
Based on procedures to compute bases of rational Krylov subspaces, we propose methods to solve the IEPs formulated in Problem \ref{def:HPIEP} and \ref{def:TPIEP}.
A common assumption for such methods is that they do not break down, i.e., they will run to completion and the resulting basis will span $\mathbb{C}^m$ if the Krylov space is generated using an $m\times m$ matrix.
Details about breakdowns can be found in literature \cite{Wi65,Gu97,LiSt13}.
In the remainder of this text we assume that no breakdowns occur, this is an appropriate setting since we have full control over the matrix and the starting vector which generate the Krylov subspaces.\\
The Hessenberg pencil IEP is solved by applying rational Arnoldi iteration \cite{Ru84,Ru94} in Section \ref{sec:RatArnoldi} and in our setting this iteration cannot break down. 
The solution of the tridiagonal pencil IEP is discussed in Section \ref{sec:RatLanczos} and relies on a rational Lanczos iteration \cite{VBVBVa19}.
Due to its biorthogonal nature, this iteration can break down an therefore we must rely on our assumption.

\subsection{Rational Arnoldi iteration}\label{sec:RatArnoldi}
The rational Arnoldi iteration constructs an orthogonal basis $Q$ for the rational Krylov space $\mathcal{K}_m(A,v;\Xi)$, where $A\in \mathbb{C}^{m\times m}$, $v \in \mathbb{C}^m$ and $\Xi = \{\xi_i \}_{i= 1}^{m-1}$, $\xi_i\in\bar{\mathbb{C}}$ are given. 
The idea, following Ruhe \cite{Ru94}, is to construct $Q= \begin{bmatrix}
q_1 & \dots & q_m
\end{bmatrix}$ column by column such that $Q^H Q = I$, $\textrm{span}\{q_1,\dots, q_i \} = \mathcal{K}_i(A,v;\Xi)$ for $i=1,\dots,m$, and $Q^H A QK = H$, with $H$ and $K$ Hessenberg matrices.
The rational Arnoldi iteration is derived below, since it will provide insight in the relation between the Hessenberg pencil and the rational Krylov subspace.\\
The derivation starts by assuming, under induction hypothesis, that $\{q_1, \dots, q_k\}$ has already been computed, such that $\textrm{span}\{q_1, \dots, q_k\} = \mathcal{K}_k(A,v;\Xi)$. 
This space is expanded to $\mathcal{K}_{k+1}(A,v;\Xi)$ by computing a vector $\tilde{q}_{k+1}$ which has a component in $\mathcal{K}_{k+1}(A,v;\Xi)\backslash \mathcal{K}_{k}(A,v;\Xi)$,
\begin{equation}\label{eq:expand}
\tilde{q}_{k+1} = (\rho_k A-\eta_k I)(\mu_k A-\nu_k I)^{-1} t_k, \quad t_k\in \mathcal{K}_k(A,v;\Xi).
\end{equation}
Next the Gram-Schmidt orthogonalization procedure is performed on $\tilde{q}_{k+1}$ to enforce orthogonality to $\mathcal{K}_{k}(A,v;\Xi)$,
\begin{equation}\label{eq:orthog}
h_{k+1,k}q_{k+1} = \tilde{q}_{k+1} - \sum_{i=1}^k \langle \tilde{q}_{k+1}, q_i \rangle q_i.
\end{equation}
Then $\textrm{span}\{q_1,\dots,q_k,q_{k+1} \} = \mathcal{K}_{k+1}(A,v;\Xi)$ and for appropriate normalization $\langle q_{k+1},q_i \rangle = \delta_{i,k+1}$, $i\leq k+1$.
Set $h_{i,k}:= \langle \tilde{q}_{k+1}, q_i \rangle$, for $i=1,\dots, k$, and $h_{k+1,k}$ follows from normalization. 
Combine \eqref{eq:expand} and \eqref{eq:orthog} to obtain the recurrence relation
\begin{equation*}
A \begin{bmatrix}
q_1 & \cdots & q_{k-1} & q_k & q_{k+1}\end{bmatrix}  \begin{bmatrix}
\mu_k h_{1,k}\\
\vdots\\
\mu_k h_{k-1,k}\\
\mu_k h_{k,k} - \rho_k\\
\mu_k h_{k+1,k}
\end{bmatrix} = \begin{bmatrix}
q_1 & \cdots &  q_{k+1}\end{bmatrix}  \begin{bmatrix}
\nu_k h_{1,k}\\
\vdots\\
\nu_k h_{k-1,k}\\
\nu_k h_{k,k}-\eta_k\\
\nu_k h_{k+1,k}
\end{bmatrix}.
\end{equation*}
The recurrence coefficients are collected in the $(k+1)\times k$ Hessenberg matrix pencil $(\widetilde{H}_k,\widetilde{K}_k)$ formed by
\begin{align*}
\widetilde{H}_k &= \begin{bmatrix}
h_{11}-\eta_1/\mu_1 & \dots & h_{1k}\\
h_{21} & \ddots & \vdots\\ 
& \ddots & h_{kk}-\eta_k/\mu_k\\
&  & h_{k+1,k}
\end{bmatrix} \textrm{diag}(\mu_1,\dots,\mu_k),\\
\widetilde{K}_k &= \begin{bmatrix}
h_{11}-\rho_1/\nu_1 & \dots & h_{1k}\\
h_{21} & \ddots & \vdots\\ 
& \ddots & h_{kk}-\rho_k/\nu_k\\
&  & h_{k+1,k}
\end{bmatrix} \textrm{diag}(\nu_1,\dots,\nu_k).
\end{align*}
The poles of the RKS appear as ratios on the subdiagonal of this pencil, $\frac{\tilde{h}_{i+1,i}}{\tilde{k}_{i+1,i}} = \frac{\mu_i}{\nu_i} = \xi_i$.
For $k=m$ it is easy to see that $h_{m+1,m} = 0$ and thus $AQ_m K = Q_m H$ is satisfied, where $H,K\in \mathbb{C}^{m\times m}$ are obtained by truncating the $(m+1)$th row of $\widetilde{H}_m,\widetilde{K}_m$.
And $Q:=Q_m$ forms an orthonormal basis for $\mathcal{K}_m(A,v;\Xi)$.
Theorem~\ref{theorem:sol_Arnoldi} states how this procedure can be used to solve Problem~\ref{def:HPIEP}.
\begin{theorem}\label{theorem:sol_Arnoldi}
	Let the unitary matrix $Q\in \mathbb{C}^{m\times m}$ and Hessenberg matrices $H,K\in \mathbb{C}^{m\times m}$ be obtained by applying the rational Arnoldi iteration for $\mathcal{K}(Z,v;\Xi)$, with $Z$, $v$ and $\Xi$ as in Problem \ref{def:HPIEP}.
	Then these matrices solve Problem \ref{def:HPIEP}.
\end{theorem}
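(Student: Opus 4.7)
The plan is to verify that each of the four requirements of Problem \ref{def:HPIEP} is delivered by the rational Arnoldi iteration described immediately above the statement, so the argument is essentially a checklist that unpacks what the iteration constructs. Concretely, I would take the quantities $Q$, $H$, $K$ produced by the iteration applied to $A=Z$, $v$, and $\Xi$, and verify, in turn, (i)~unitarity of $Q$, (ii)~the first-column condition $Qe_1 = v/\|v\|$, (iii)~Hessenberg structure of the pencil together with the subdiagonal ratio condition $h_{i+1,i}/k_{i+1,i}=\xi_i$, and (iv)~the defining relation $Q^H Z Q K = H$.

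For (i) and (ii), I would note that the starting vector is defined as $q_1 := v/\|v\|$, and that at every subsequent step the Gram--Schmidt orthogonalization in \eqref{eq:orthog} enforces $\langle q_{k+1}, q_i\rangle = \delta_{i,k+1}$ for $i \le k+1$. Inductively this gives $Q^HQ=I$, with $Qe_1 = q_1 = v/\|v\|$ immediately. Here I would also invoke the running no-breakdown assumption, which is legitimate because $Z$ is diagonal with distinct entries $z_i$ and $v$ has no zero components, so $\dim \mathcal{K}_m(Z,v;\Xi) = m$ and the iteration runs to completion. For (iii), the derivation of $(\widetilde H_k, \widetilde K_k)$ exhibits both matrices in upper Hessenberg form, and the $(k{+}1,k)$-entries are exactly $\mu_k h_{k+1,k}$ and $\nu_k h_{k+1,k}$, giving the subdiagonal ratio $\mu_k/\nu_k = \xi_k$; passing to $k=m$ truncates a zero row and preserves the Hessenberg shape of the square pencil $(H,K)$.

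For (iv), the collected recurrence at step $k=m$ is $Z Q_m K = Q_m H$, since the vanishing of $h_{m+1,m}$ removes the last column of the companion relation. Writing $Q:=Q_m$ and left-multiplying by $Q^H$ yields $Q^H Z Q K = Q^H Q H = H$ using unitarity established in (i).

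I do not expect a genuine obstacle here; the theorem is essentially a repackaging of the preceding derivation into the formal language of Problem \ref{def:HPIEP}. The only mildly delicate point is the justification that no breakdown occurs so that $Q$ is genuinely $m \times m$ and unitary, which is handled by the blanket assumption stated at the start of Section \ref{sec:sol_Krylov} together with the distinctness of the nodes $z_i$ and nonvanishing of the entries of $v$ guaranteed by Problem \ref{def:HPIEP}.
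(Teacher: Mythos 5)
Your proposal is correct and follows the same route as the paper: the paper's proof likewise notes that the conditions on $Z$ and $v$ make $\mathcal{K}_m(Z,v;\Xi)=\mathbb{C}^m$ so that $Q$ is unitary and $Q^{-1}ZQK=H$, and then appeals to the properties of the rational Arnoldi iteration (first column, Hessenberg structure, subdiagonal ratios) for the rest. Your version simply spells out that checklist explicitly, which the paper leaves implicit.
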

\begin{proof}
	Since $\mathcal{K}_m(Z,v;\Xi) = \mathbb{C}^m$ by the conditions on $Z$ and $v$, $Q$ is unitary. Thus $Q^{-1}ZQK=H$, i.e., the spectrum of $(H,K)$ equals $\sigma(Z)$. The remainder of the proof follows immediately from the properties of the matrices obtained by the rational Arnoldi iteration.\qed
\end{proof}

\subsection{Rational Lanczos Iteration}\label{sec:RatLanczos}
The rational Lanczos iteration \cite{VBVBVa19} constructs a pair of biorthogonal bases $V,W$ for rational Krylov subspaces $\mathcal{K}_m(A,v;\Xi)$ and $\mathcal{K}_m(A^H,w;\Psi)$, respectively. Here $A\in \mathbb{C}^{m\times m}$, $v,w \in \mathbb{C}^m$, $\langle v,w \rangle \neq 0$, and $\Xi = \{\xi_i \}_{i= 1}^{m-1}$, $\Psi = \{\psi_i \}_{i= 1}^{m-1}$, $\xi_i, \psi_i\in \bar{\mathbb{C}}$. The recurrence relation underlying this iteration is
$AV_{k+1} S_k = V_{k+1} T_k,$
where the recurrence matrices form a tridiagonal pencil $(T_k,S_k)$,
\begin{align*}
T_k = \begin{bmatrix}
t_{1,1} & t_{1,2} &  &  &    \\
t_{2,1} & t_{2,2} & t_{2,3}   &  & \\
& t_{3,2} & t_{3,3} & \ddots  &\\
& & \ddots & \ddots   & t_{k-1,k} \\
& &  & t_{k,k-1} & t_{k,k} \\
& &  &  &t_{k+1,k} \\
\end{bmatrix},\\
S_k = \begin{bmatrix}
s_{1,1} &s_{1,2} &  &  &    \\
s_{2,1} & s_{2,2} & s_{2,3}   &  & \\
& s_{3,2} & s_{3,3} & \ddots  &\\
& & \ddots & \ddots   & s_{k-1,k} \\
& &  & s_{k,k-1} & s_{k,k} \\
& &  &  & s_{k+1,k} \\
\end{bmatrix},
\end{align*}
with $\frac{t_{i,i+1}}{s_{i,i+1}} = \xi_i$, $i = 1,2,\dots, k$ and  $\frac{t_{i+1,i}}{s_{i+1,i}} = \bar{\psi}_{i-1}$, $i = 2,3,\dots, k$.
These ratios are referred to as the poles of the pencil.
It is important to note that the ratio $t_{1,2}/s_{1,2}$ does not equal a pole.
A similar recurrence relation exists for $W_{k+1}$.
For $k=m$, the equation becomes $AV_m S = V_m T$ for $m\times m$ matrices $T,S$, obtained by truncating the last row of $T_m,S_m$, respectively.\\
Deriving this result will be too involved, but the underlying idea is the same as in the rational Arnoldi iteration. Details can be found in literature \cite{VBVBVa19}.
Theorem~\ref{theorem:sol_Lanczos} reveals that this iteration can be used to solve the TPIEP of Problem \ref{def:TPIEP}.

\begin{theorem}\label{theorem:sol_Lanczos}
	Let the matrices $V,W\in \mathbb{C}^{m\times m}$ be the bases and $T,S\in \mathbb{C}^{m\times m}$ the matrices of recurrences coefficients obtained by applying the rational Lanczos iteration for $\mathcal{K}_m(Z,v;\Xi)$ and $\mathcal{K}_m(Z^H,w;\varPsi)$, with $Z$, $v,w$ and $\Xi, \varPsi$ as in Problem~\ref{def:TPIEP}.
	Then these matrices solve Problem~\ref{def:TPIEP}, if the iteration does not break down.
\end{theorem}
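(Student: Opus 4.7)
The plan is to mirror the proof of Theorem~\ref{theorem:sol_Arnoldi}, verifying that the rational Lanczos iteration produces matrices satisfying every condition of Problem~\ref{def:TPIEP}. Under the hypothesis that no breakdown occurs and because the nodes $z_i$ are distinct and $\langle v,w\rangle \neq 0$, the iteration runs to completion and yields full-rank matrices $V,W\in\mathbb{C}^{m\times m}$ whose columns form biorthogonal bases of $\mathcal{K}_m(Z,v;\Xi) = \mathcal{K}_m(Z^H,w;\varPsi) = \mathbb{C}^m$.

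From the construction recalled in this subsection, the iteration directly produces the recurrence relation $Z V S = V T$, the biorthogonality relation $W^H V = I$, the initialization $V e_1 = v/\nu$ and $W e_1 = w/\eta$ with the normalization $w^H v = \nu\bar{\eta}$, and the prescribed pole ratios along the super- and subdiagonals of the pencil $(T,S)$. The only nontrivial step is then to left-multiply $Z V S = V T$ by $W^H$ and invoke biorthogonality to obtain the identity $W^H Z V S = T$ required by Problem~\ref{def:TPIEP}. All remaining conditions are read off directly from the rational Lanczos output described in \cite{VBVBVa19}.

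The main delicate point, and the reason the theorem states its conclusion conditionally, is the non-breakdown hypothesis itself. In contrast to the rational Arnoldi iteration, where Gram--Schmidt orthonormalization of a nonzero vector against an orthonormal set never fails, the biorthogonal Lanczos step requires the scalar $\langle \tilde{v}_{k+1},\tilde{w}_{k+1}\rangle$ to be nonzero in order to produce the next biorthonormal pair. Since we control only $Z$, $v$, $w$, $\Xi$, $\varPsi$, we cannot in general rule out such a serious breakdown a priori, which is why it is adopted as an explicit assumption. Once this assumption is granted, the verification reduces to a straightforward collection of the properties of the Lanczos output, exactly parallel to the Arnoldi proof above.
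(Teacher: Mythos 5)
Your proposal is correct and follows essentially the same route as the paper's proof: both use the no-breakdown assumption to get $\mathcal{K}_m(Z,v;\Xi)=\mathcal{K}_m(Z^H,w;\varPsi)=\mathbb{C}^m$ and $W^HV=I$ (equivalently $W=V^{-H}$), then turn the Lanczos recurrence $ZVS=VT$ into $W^HZVS=T$ and read the remaining conditions (initial vectors, normalization, pole ratios, tridiagonal structure) directly off the rational Lanczos output.
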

\begin{proof}
	Since, under the no-breakdown assumption, $\mathcal{K}_m(Z,v;\Xi) = \mathcal{K}_m(Z^H,w;\varPsi) = \mathbb{C}^m$ and $W^H V = I$, we have $W = V^{-H}$. Thus $V^{-1}ZVS=T$, i.e., the spectrum of the pencil $(T,S)$ equals $\sigma(Z)$. The remainder of the proof follows immediately from the rational Lanczos iteration. With appropriate normalization $V e_1 = \frac{v}{\nu}$ and $We_1 = \frac{w}{\eta}$, with $\nu \bar{\eta}=w^Hv$.\qed
\end{proof}

\subsection{Uniqueness of the solution}
The uniqueness of the solution to Problem \ref{def:HPIEP} and \ref{def:TPIEP} is studied. It is shown that, for a fixed normalization, nested biorthonormal bases are essentially unique.
\begin{theorem}[Essential uniqueness of biorthonormal nested bases]\label{theorem:uniqueBases}
	Consider $A\in \mathbb{C}^{m\times m}$, $v,w\in  \mathbb{C}^m$ and rational Krylov subspaces
	\begin{align*}
	\mathcal{K}_m(A,v;\Xi), \quad
	\mathcal{K}_m(A^H,w;\varPsi).
	\end{align*}
	Then, under the no-breakdown assumption, a pair of biorthonormal nested bases $V,W\in \mathbb{C}^{m\times m}$ for these subspaces exists and this pair is essentially unique.
\end{theorem}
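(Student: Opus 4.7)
The plan is to treat existence and uniqueness separately. Existence is immediate from Theorem~\ref{theorem:sol_Lanczos}: under the no-breakdown assumption, the rational Lanczos iteration produces nested biorthonormal bases $V,W$ of $\mathcal{K}_m(A,v;\Xi)$ and $\mathcal{K}_m(A^H,w;\varPsi)$, so nothing further is required for this half.

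For uniqueness, I would start from two candidate pairs $(V,W)$ and $(\tilde V,\tilde W)$, each nested and biorthonormal. Because $\textrm{span}\{v_1,\dots,v_i\} = \mathcal{K}_i(A,v;\Xi) = \textrm{span}\{\tilde v_1,\dots,\tilde v_i\}$ for every $i$, there is a unique invertible upper-triangular matrix $R$ with $V = \tilde V R$. Applied on the $W$-side, the same argument yields an invertible upper-triangular $L$ with $W = \tilde W L$. The real content of this step is that nestedness is being invoked at every intermediate index $i$, not only at $i=m$; this is what forces triangularity rather than mere invertibility of the change-of-basis.

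Biorthonormality then finishes the job in one line: $I = W^H V = L^H(\tilde W^H \tilde V)R = L^H R$, so $R^{-1} = L^H$. But $R^{-1}$ is upper triangular while $L^H$ is lower triangular, forcing both to equal a single diagonal matrix $D = \textrm{diag}(d_1,\dots,d_m)$. Hence $v_i = d_i \tilde v_i$ and $w_i = \bar d_i^{\,-1}\tilde w_i$, which is precisely the ``essential'' ambiguity the statement allows; once a scalar normalization is fixed for each pair $(v_i,w_i)$ (for instance $\|v_i\|_2 = 1$ together with a sign or phase convention), the diagonal $D$ is determined and the pair is unique.

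There is no deep technical obstacle here. The only step with content is spotting the upper-triangular reduction coming from nestedness; after that, the elementary ``upper triangular $\cap$ lower triangular $=$ diagonal'' fact, together with the biorthonormality identity $L^H R = I$, closes the argument cleanly.
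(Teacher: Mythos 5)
Your proof is correct, but it takes a somewhat different route than the paper. The paper writes both candidate pairs in terms of the fixed Krylov bases $K_v,K_w$ of \eqref{eq:KrylovBasisV}--\eqref{eq:KrylovBasisW}, so that biorthonormality turns each pair into an LR factorization $M=R_W^{-H}R_V=\widetilde{R}_W^{-H}\widetilde{R}_V$ of the moment matrix $M=K_w^HK_v$, and then invokes the essential uniqueness of the LR factorization of a strongly nonsingular matrix; strong nonsingularity of $M$ is exactly where the no-breakdown assumption enters, and it simultaneously supplies existence through Lemma~\ref{theorem:momentLR}. You instead compare the two pairs directly: nestedness at every index gives upper-triangular changes of basis $V=\widetilde{V}R$, $W=\widetilde{W}L$, and $I=W^HV=L^HR$ forces $R$ and $L^H$ to be a common diagonal $D$ with the reciprocal-conjugate action on the $W$-side. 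This is in essence an inline re-proof of the LR-uniqueness fact, so the mathematical content coincides; your version is more self-contained and avoids the moment matrix and the citation, while the paper's version makes explicit the link to the moment-matrix formulation of Section~\ref{sec:moment} (which is the stated purpose of that section) and shows precisely how no-breakdown translates into strong nonsingularity. One small caveat on your existence step: Theorem~\ref{theorem:sol_Lanczos} is formulated for a diagonal $Z$ with distinct nodes as in Problem~\ref{def:TPIEP}, whereas the theorem here allows a general $A$; it is cleaner to appeal to the rational Lanczos iteration of Section~\ref{sec:RatLanczos} in its general form, or to get existence directly from the LR factorization of the strongly nonsingular moment matrix via Lemma~\ref{theorem:momentLR}, as the paper implicitly does.
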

\begin{proof}
	Let $K_v,K_w$ denote the Krylov bases for $\mathcal{K}_n(A,v;\Xi)$ and $
	\mathcal{K}_n(A^H,w;\varPsi)$, respectively.
	The matrices $V,W$ form nested biorthonormal bases for $\mathcal{K}_n(A,v;\Xi)$, $\mathcal{K}_n(A^H,w;\varPsi)$, respectively.
	By Lemma \ref{theorem:momentLR} this implies the existence of upper triangular matrices $R_V, R_W$ such that
	\begin{align*}
	V = K_v R_V \quad \text{and} \quad W = K_w R_W.
	\end{align*}
	Consider also nested biorthonormal bases $\widetilde{V},\widetilde{W}$ which satisfy, for some upper triangular matrices $\widetilde{R}_V$, $\widetilde{R}_W$,
	\begin{align*}
	\widetilde{V} = K_v \widetilde{R}_V,\quad \widetilde{W} = K_w \widetilde{R}_W \quad \text{and} \quad \widetilde{W}^H \widetilde{V} = I.
	\end{align*}
	The relation of the upper triangular matrices to the moment matrix $M = K_w^H K_v$ proves the statement,
	\begin{equation*}
	W^H V = R_W^H K_w^H K_v R_V = R_W^H M R_V = I\Leftrightarrow M = R_W^{-H} R_V,
	\end{equation*}
	and similarly $M = \widetilde{R}_W^{-H} \widetilde{R}_V$.
	Both expressions are LR factorizations of the strongly nonsingular moment matrix $M$. Strong nonsingularity is implied by the assumption that no breakdowns occur.
	Since the LR factorization is essentially unique \cite[p.162]{HoJo85}, it follows that $\widetilde{R}_V = R_V D$ and $\widetilde{R}_W = R_W D$, where $D = \textrm{diag}(\alpha_1,\dots, \alpha_n)$, with $\vert \alpha_i\vert = 1$, if the same normalization is applied. This proves the essential uniqueness of the nested biorthonormal bases $V,W$. \qed
\end{proof}

Theorem \ref{theorem:uniqueBases} implies that the rational Lanczos iteration generates the same bases as would be obtained by the LR factorization of the corresponding moment matrix (or any other procedure constructing biorthonormal nested bases). The former generates, simultaneously with the bases, matrices of recurrence coefficients which adhere to the imposed structure, a tridiagonal pencil. This is not the case for the latter.
In the orthogonal case, the rational Arnoldi iteration, the above reasoning can be applied after setting $\mathcal{K}_m(A^H,w;\varPsi)=\mathcal{K}_m(A,v;\Xi)$ and $K_w=K_v$. An alternative proof for this case can be obtained based on the rational implicit Q theorem \cite{CaMeVa19_QZ}.

\section{Updating procedures}\label{sec:sol_update}
Updating procedures start from a known solution $B \in \mathbb{C}^{m \times m}$ to a structured IEP of size $m$ and construct a solution $\widetilde{B}\in \mathbb{C}^{(m+1)\times (m+1)}$ to a related IEP of size $m+1$, which is obtained by adding a node and weight(s) to the underlying bilinear form.
The goal is to devise such a procedure that is as efficient and numerically stable as possible.
Updating procedures consist of three steps
\begin{enumerate}
	\item Embed all matrices, belonging to the solution in $\mathbb{C}^{m\times m}$, in larger matrices belonging to $\mathbb{C}^{(m+1)\times (m+1)}$, denote embedded matrices by a hat.
	\item Enforce orthogonality with respect to the new weight vector(s), referred to as the orthogonality condition. This will disrupt the structure of the embedded matrix $\hat{B}\in\mathbb{C}^{(m+1)\times (m+1)}$.
	\item Enforce the correct structure on the embedded matrix $\hat{B}$, i.e., enforce the structure condition, to obtain $\widetilde{B}$, the solution to the larger IEP. 
\end{enumerate}
Problem \ref{def:HPIEP} (or \ref{def:TPIEP}) requires, aside from the new node and weight(s), also a new pole (or two poles) to fully characterize the structure of $\widetilde{B}$.
Section \ref{sec:HPIEP_update} proposes an updating procedure for Problem \ref{def:HPIEP} using unitary similarity transformations. 
This idea originates from a procedure to multiply J-fractions \cite{Ru63} and has been used to develop several procedures to solve IEPs linked to polynomials \cite{BuVB95,Re91,ReAmGr91b} and rational functions with prescribed, distinct poles \cite{VBFaGeMa05}.
The latter formulates the problem as a semiseparable-plus-diagonal IEP.
Our formulation is in the form of a Hessenberg pencil IEP.
Hessenberg pencils provide a more intuitive representation and allows more flexibility.\\
The idea also appears in different contexts but fulfilling a similar role, e.g., computing deflating subspaces for generalized eigenvalue problems \cite{VD81,CaMaVaWa20} and model order reduction \cite{OlRu06}. 
The updating procedure for the TPIEP from Problem \ref{def:TPIEP} must use nonunitary similarity transformations and is proposed in Section \ref{sec:TPIEP_Update}.
This problem occurs less often in literature.
Our interest in this case is triggered by the underlying short recurrence relation.	

\subsection{Updating Hessenberg pencil IEP}\label{sec:HPIEP_update}
An updating procedure for Problem \ref{def:HPIEP} is introduced. The updating relies on \emph{plane rotations} which are essentially $2\times2$ unitary matrices $P_i$, let $I_k$ denote the unit matrix of size $k\times k$, then
\begin{equation}\label{eq:plane_rotation}
P_i := \begin{bmatrix}
I_{i-1}     \\
&    \bar{a} &  & -\bar{b} \\
&     & I_{m-i} \\
& b  & & a \\
\end{bmatrix}, \text{ with parameters } a,b\in\mathbb{C} \text{ and } \bar{a}a+\bar{b}b = 1.
\end{equation}
The class of plane rotations $P_i$ is denoted by $\mathfrak{P}_i$.
The procedure starts from a unitary matrix $Q\in \mathbb{C}^{m\times m}$ and a proper Hessenberg pencil $(H,K)\in \mathbb{C}^{m\times m}$ that solve the HPIEP formulated in Problem \ref{def:HPIEP}.
A Hessenberg pencil is called \emph{proper} if the elements $h_{i+1,i}$ and $k_{i+1,i}$ are never simultaneously zero, for any $i< m$.
Let $Z = \textrm{diag}(\{z_i\}_{i=1}^m)$ denote the matrix of nodes, $v = \begin{bmatrix}
v_1 & v_2 & \dots & v_m
\end{bmatrix}^\top$ the weights and $\Xi = \{\xi_i \}_{i=1}^{m-1}$ poles of the considered problem. 
Then the following equalities are satisfied
\begin{equation}\label{eq:solution_originalHPIEP}
Q^H Z QK = H, \qquad Q^H v = \Vert v \Vert e_1.
\end{equation}
From this solution, construct the Hessenberg pencil $(\widetilde{H},\widetilde{K})\in \mathbb{C}^{(m+1)\times(m+1)}$ and orthonormal basis $\widetilde{Q}\in\mathbb{C}^{(m+1)\times (m+1)}$ which solve the HPIEP for $\widetilde{Z} := \textrm{diag}(\{z_i\}_{i=1}^{m+1})$, $\tilde{v} :=\begin{bmatrix}
v_1  & \dots & v_m & v_{m+1} 
\end{bmatrix}^\top$ and $\widetilde{\Xi} := \{\xi_i\}_{i=1}^{m}$.\\
First, the $m$-dimensional solution is embedded in a matrix in $\mathbb{C}^{(m+1)\times (m+1)}$ while preserving some key properties.
The unitary matrix $Q\in \mathbb{C}^{m\times m}$ is embedded while preserving its unitarity, and the pencil $(H,K)$ such that $\widetilde{Z}\widehat{Q}\widehat{K} = \widehat{Q}\widehat{H}$,
\begin{equation}\label{eq:embedded_HPIEP}
\widehat{Q} := \begin{bmatrix}
Q\\
& 1
\end{bmatrix}, \mkern6mu 	\widehat{H} := \begin{bmatrix}
H\\
& \hat{h}
\end{bmatrix},\mkern6mu
\widehat{K} := \begin{bmatrix}
K\\
& \hat{k}
\end{bmatrix},
\end{equation}
where $\hat{h}$ and $\hat{k}$ must satisfy $z_{m+1} \hat{k} =\hat{h}$.\\
The main result in this section is provided in Theorem \ref{theorem:solution_HPIEP} which states that $2m$ plane rotations suffice to construct the solution of an HPIEP of size $m+1$ starting from the solution of a related HPIEP of size $m$. The remainder of the section is dedicated to proving results that can be combined to obtain this theorem.
\begin{theorem}\label{theorem:solution_HPIEP}
	Let $Q\in \mathbb{C}^{m\times m}$, $(H,K)\in\mathbb{C}^{m\times m}$ be the solution to a HPIEP (Problem \ref{def:HPIEP}) of size $m$.
	And let $\widehat{Q}$, $\widehat{H}$ and $\widehat{K}$ denote the embedded matrices from \eqref{eq:embedded_HPIEP}.
	Then there exist $P_i, \dot{P}_i \in \mathfrak{P}_i$, $i=1,\dots, m$, such that
	\begin{equation*}
	\widetilde{H} = \prod_{l=m}^{1} P_l \widehat{H} \prod_{k=1}^{m}\dot{P}_k, \quad \widetilde{K} = \prod_{l=m}^{1} P_l \widehat{K} \prod_{k=1}^{m}\dot{P}_k,\quad \widetilde{Q} = \widehat{Q} \prod_{l=1}^{m}P_l^H
	\end{equation*}
	solve a HPIEP of size $m+1$, which is obtained by adding a node $z_{m+1}$, weight $v_{m+1}$ and pole $\xi_m$ to the original IEP. 
\end{theorem}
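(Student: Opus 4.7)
The plan is to build the rotations in two stages, exploiting the fact that the similarity relation $\widetilde{Q}^H \widetilde{Z} \widetilde{Q} \widetilde{K} = \widetilde{H}$ and the spectral condition $\sigma(\widetilde{H}, \widetilde{K}) = \{z_1, \ldots, z_{m+1}\}$ hold automatically, regardless of how the rotations are selected. Indeed, from \eqref{eq:solution_originalHPIEP} together with $\hat{h} = z_{m+1}\hat{k}$ one obtains $\widetilde{Z}\widehat{Q}\widehat{K} = \widehat{Q}\widehat{H}$, and sandwiching by the unitary factors $\prod_{l=m}^{1} P_l$ and $\prod_{k=1}^m \dot{P}_k$ is a strict equivalence that preserves both. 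So only three conditions need active enforcement: (i) $\widetilde{Q} e_1 = \tilde{v}/\Vert \tilde{v}\Vert$, (ii) $\widetilde{H}, \widetilde{K}$ are Hessenberg, and (iii) $\tilde{h}_{i+1,i}/\tilde{k}_{i+1,i} = \xi_i$ for $i = 1, \ldots, m$.

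First I determine $P_1 \in \mathfrak{P}_1$ from condition (i). Because each $P_j$ with $j \geq 2$ acts nontrivially only on rows $j$ and $m+1$, the vector $P_j^H e_1 = e_1$, so $\widetilde{Q} e_1 = \widehat{Q} P_1^H e_1 = a\,\widehat{Q} e_1 - b\,\widehat{Q} e_{m+1}$, where $a, b$ are the parameters of $P_1$ as in \eqref{eq:plane_rotation}. Equating this with $\tilde{v}/\Vert \tilde{v}\Vert$ uniquely fixes $a = \Vert v\Vert/\Vert \tilde{v}\Vert$ and $b = -v_{m+1}/\Vert \tilde{v}\Vert$, and the normalization $|a|^2 + |b|^2 = 1$ is automatic. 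Once $P_1$ is applied on the left of $\widehat{H}, \widehat{K}$, the entries $(m+1, j)$ become $b\,h_{1j}$ and $b\,k_{1j}$ for $j \leq m$, creating a spike in the last row that destroys the Hessenberg form.

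The remaining $2m - 1$ rotations are selected by a bulge-chasing procedure. For each $k = 1, \ldots, m-1$ the pair $(\dot{P}_k, P_{k+1})$ supplies two complex degrees of freedom, which I use to annihilate simultaneously the $(m+1, k)$ entries in both transformed $H$ and $K$. This yields a $2\times 2$ linear system in the ratios $d_k/\bar{c}_k$ and $b_{k+1}/a_{k+1}$, solvable provided that a determinant built from the current diagonal and subdiagonal entries is nonzero; I expect this to follow from properness of $(H, K)$ combined with the assumption that nodes do not coincide with poles. Since $\dot{P}_k$ touches only columns $k, m+1$ and $P_{k+1}$ only rows $k+1, m+1$, the zeros previously created at $(m+1, 1), \ldots, (m+1, k-1)$ are left intact. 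After $m-1$ chases only the entry $(m+1, m)$ survives, and I use the last right rotation $\dot{P}_m$ to enforce the new pole condition $\tilde{h}_{m+1, m}/\tilde{k}_{m+1, m} = \xi_m$.

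The crux, and the step I expect to be the main obstacle, is verifying that the old pole ratios $\tilde{h}_{k+1, k}/\tilde{k}_{k+1, k} = \xi_k$ with $k < m$ survive each chase, since $(\dot{P}_k, P_{k+1})$ does modify both pencil entries at position $(k+1, k)$. A direct calculation at the first step reveals the intended cancellation: with $\alpha = d_1/\bar{c}_1$ chosen so that the $(m+1, 1)$ bulge is annihilated in both pencil matrices, the solvability relation $\alpha a(\hat{k} h_{21} - \hat{h} k_{21}) = b(h_{11} k_{21} - k_{11} h_{21})$ is precisely the identity that forces the new $(2, 1)$ ratio to remain $h_{21}/k_{21} = \xi_1$. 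The plan is to prove the analogous identity at every chase step by induction on $k$, so that the resulting Hessenberg pencil inherits the subdiagonal ratios $\xi_1, \ldots, \xi_{m-1}$ from $(H, K)$ with $\xi_m$ appended by $\dot{P}_m$. Once (iii) is established together with (i)--(ii), the rational implicit Q theorem invoked after Problem~\ref{def:HPIEP} identifies the column span of $\widetilde{Q}$ with $\mathcal{K}_{m+1}(\widetilde{Z}, \tilde{v}; \widetilde{\Xi})$, completing the solution of the enlarged HPIEP.
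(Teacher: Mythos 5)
Your overall architecture is the same as the paper's: $P_1$ fixed by the orthogonality condition (your parameters $a=\Vert v\Vert/\Vert\tilde v\Vert$, $b=-v_{m+1}/\Vert\tilde v\Vert$ are exactly those of Lemma~\ref{lemma:P1}), a chase of the spike in the last row by pairs $(\dot P_k,P_{k+1})$, and a final $\dot P_m$ to install $\xi_m$, with the similarity and spectrum coming for free. The difference is that where the paper (Lemma~\ref{lemma:Hess_pencil}) constructs the right rotation first, via $M=\delta K^b-\beta H^b$, to make the first columns of the two $2\times2$ blocks colinear and then annihilates both with one left rotation -- which makes pole preservation immediate ($h=u\delta$, $k=u\beta$) -- you propose to determine both rotations at once from a $2\times2$ linear system in the ratios $b_{k+1}/a_{k+1}$ and $d_k/\bar c_k$. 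That route is viable, but as written it has a genuine gap: the two statements that carry the whole theorem are left as expectations. You verify pole preservation only at the first chase step and say ``the plan is to prove the analogous identity at every chase step by induction,'' and you only ``expect'' solvability of the $2\times2$ system at each step. These are precisely the content of the paper's Lemmas~\ref{lemma:NoBreakdown}--\ref{lemma:poles_Hess}, so the proposal stops short of the actual proof.

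Both missing pieces can be closed, and it is worth seeing what is really needed. With block $H^b=\bigl[\begin{smallmatrix}\delta&0\\ \gamma&\eta\end{smallmatrix}\bigr]$, $K^b=\bigl[\begin{smallmatrix}\beta&0\\ \alpha&\epsilon\end{smallmatrix}\bigr]$, your system reads $\delta x+\eta y=-\gamma$, $\beta x+\epsilon y=-\alpha$ with determinant $\delta\epsilon-\beta\eta$; once it is solved, a short computation gives the new subdiagonal entries $h=\delta\bar c/a$ and $k=\beta\bar c/a$, so the ratio $\xi_k=\delta/\beta$ is preserved automatically at \emph{every} step, not just the first -- this is the uniform identity your induction would have to produce, and it is the same cancellation the paper obtains structurally from colinearity. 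Solvability, however, does not follow from properness alone: $\delta\epsilon-\beta\eta\neq0$ means $\xi_k\neq\eta/\epsilon$, so you must track the invariant that the ratio of the trailing $(m+1,m+1)$ entries stays equal to $z_{m+1}$ throughout the chase (it does: the same computation gives new trailing entries $a\eta/\bar c$, $a\epsilon/\bar c$), and that these entries never vanish -- the paper handles the latter with the rank argument of Lemma~\ref{lemma:NoBreakdown}, which is also what licenses the final step of Lemma~\ref{lemma:poles_Hess} introducing $\xi_m$. You also need to treat the degenerate parametrization ($a=0$ or $c=0$, and $\delta=0$ or $\beta=0$ for poles $0$ or $\infty$), which the ratio variables $x,y$ silently exclude and which the paper addresses separately at the end of the proof of Lemma~\ref{lemma:Hess_pencil}. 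Without these verifications the existence of the rotations, the no-breakdown claim, and the preservation of $\xi_1,\dots,\xi_{m-1}$ are asserted rather than proved.
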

The embedded matrix $\widehat{Q}$ does not satisfy the orthogonality condition.
This condition will be enforced via multiplication with a suitable plane rotation $P_1\in \mathfrak{P}_1$, details are stated in Lemma \ref{lemma:P1}.

\begin{lemma}\label{lemma:P1}
	Let $\widehat{Q}$ be the embedded matrix from \eqref{eq:embedded_HPIEP}.
	Then there exists a $P_1 \in \mathfrak{P}_1$ such that $\widehat{Q} P_1^H$ satisfies the orthogonality condition $P_1\widehat{Q}^H \tilde{v} = \Vert \tilde{v} \Vert e_1$.
\end{lemma}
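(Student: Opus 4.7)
The plan is a direct construction of the rotation parameters. First I would compute $\widehat{Q}^H \tilde v$ explicitly from the block structure of $\widehat{Q}$ in \eqref{eq:embedded_HPIEP} together with the fact, from \eqref{eq:solution_originalHPIEP}, that $Q^H v = \|v\| e_1$. Since $\widehat{Q}^H$ is block diagonal with blocks $Q^H$ and $1$, and $\tilde v = \begin{bmatrix} v^\top & v_{m+1}\end{bmatrix}^\top$, one obtains
\begin{equation*}
\widehat{Q}^H \tilde v = \begin{bmatrix} \|v\|\, e_1 \\ v_{m+1}\end{bmatrix} \in \mathbb{C}^{m+1},
\end{equation*}
a vector whose only nonzero entries lie at positions $1$ and $m+1$.

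Next I would observe that the plane rotation $P_1\in\mathfrak{P}_1$ defined by \eqref{eq:plane_rotation} acts nontrivially only on coordinates $1$ and $m+1$, and is the identity on coordinates $2,\dots,m$. Since those intermediate coordinates of $\widehat{Q}^H\tilde v$ already vanish, applying $P_1$ reduces to a single $2\times 2$ rotation in the $(1,m+1)$-plane. The condition $P_1 \widehat{Q}^H\tilde v = \|\tilde v\| e_1$ then amounts to zeroing the $(m+1)$-th entry while rescaling the first.

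The parameters follow in closed form. I would take $a = \|v\|/\|\tilde v\|$ and $b = -v_{m+1}/\|\tilde v\|$. The normalization $\bar a a + \bar b b = (\|v\|^2 + |v_{m+1}|^2)/\|\tilde v\|^2 = 1$ holds because $\|\tilde v\|^2 = \|v\|^2 + |v_{m+1}|^2$, so this choice of $(a,b)$ yields a bona fide element of $\mathfrak{P}_1$. A short verification then gives bottom entry $b\|v\| + a v_{m+1} = 0$ and top entry $\bar a \|v\| - \bar b v_{m+1} = (\|v\|^2 + |v_{m+1}|^2)/\|\tilde v\| = \|\tilde v\|$, which completes the proof. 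There is really no obstacle; the lemma works out so cleanly precisely because the block embedding of $Q$ in \eqref{eq:embedded_HPIEP} was designed so that $\widehat{Q}^H \tilde v$ has support confined to the two coordinates on which $P_1$ operates, and this is exactly why a single plane rotation suffices.
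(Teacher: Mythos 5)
Your proposal is correct and follows essentially the same route as the paper: compute $\widehat{Q}^H\tilde v = \Vert v\Vert e_1 + v_{m+1}e_{m+1}$ from the block embedding and the original orthogonality condition, then pick the rotation parameters $a=\Vert v\Vert/\Vert\tilde v\Vert$, $b=-v_{m+1}/\Vert\tilde v\Vert$, which are exactly the ones used in the paper. The only addition is your explicit verification of the normalization and of the resulting top and bottom entries, which the paper leaves implicit.
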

\begin{proof}
	Consider the original solution \eqref{eq:solution_originalHPIEP} and its embedding \eqref{eq:embedded_HPIEP}, then
	\begin{equation*}
	\widehat{Q}^H \tilde{v} =  \Vert {v} \Vert e_1 + v_{m+1}e_{m+1}.
	\end{equation*}
	Suitable parameters for $P_1$ are $a = \Vert v \Vert / \Vert \tilde{v}\Vert $   and $b =-v_{m+1}/\Vert \tilde{v}\Vert$, since 
	\begin{align*}
	\renewcommand{\arraystretch}{1.2}
	\begin{bmatrix}
	\bar{a} & - \bar{b}\\
	b & a
	\end{bmatrix} \begin{bmatrix}
	\Vert v \Vert \\
	v_{m+1}
	\end{bmatrix} = \begin{bmatrix}
	\Vert \tilde{v}\Vert\\
	0
	\end{bmatrix}.
	\end{align*}\qed
\end{proof}
In the new basis formed by columns of $\widehat{Q} P_1^H$, the matrix $\widetilde{Z}$ can be represented by the pencil $(P_1\widehat{H},P_1\widehat{K})$, this follows from
\begin{align*}
\widetilde{Z} \widehat{Q} P_1^H P_1 \widehat{K} &= \widehat{Q} P_1^H P_1\widehat{H}\\
\Leftrightarrow \widetilde{Z} \widehat{Q} P_1^H  \begin{bmatrix}
\times & \times & \dots & \times & \times & \times\\
\times & \times & \dots & \times & \times & 0\\
& \times & \dots & \times & \times & 0\\
& & \ddots & \vdots & \vdots & \vdots\\
& &  & \times & \times & 0\\
\times &\times &\dots  & \times & \times & \times\\
\end{bmatrix} &= \widehat{Q} P_1^H \begin{bmatrix}
\times & \times & \dots & \times & \times & \times\\
\times & \times & \dots & \times & \times & 0\\
& \times & \dots & \times & \times & 0\\
& & \ddots & \vdots & \vdots & \vdots\\
& &  & \times & \times & 0\\
\times &\times &\dots  & \times & \times & \times\\
\end{bmatrix}.
\end{align*}
This pencil deviates from Hessenberg structure in its last row, which is filled with generic nonzero elements, denoted by $\times$.\\
The following step is to restore the Hessenberg structure using unitary similarity transforms, i.e., enforce the structure condition. 
Lemma \ref{lemma:Hess_pencil} provides the details, but first Lemma \ref{lemma:NoBreakdown} is stated, which will guarantee that the element used for eliminating the elements in the last row never vanishes.
Thus, the procedure cannot break down.
\begin{lemma}\label{lemma:NoBreakdown}
	Consider a lower triangular matrix $L\in \mathbb{C}^{2\times 2}$ of full rank and nonsingular matrices $A,B\in \mathbb{C}^{2\times 2}$ such that $R = ALB$ is upper triangular. 
	Then $r_{(2,2)}\neq 0$.
\end{lemma}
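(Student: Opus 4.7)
The plan is to argue purely by ranks (equivalently, determinants), without touching the individual entries of $A$, $B$, or $L$. The hypotheses hand us three matrices whose invertibility is given: $L$ has full rank by assumption, and $A$, $B$ are assumed nonsingular. The product $R = ALB$ therefore has rank $2$, i.e.\ $R$ is invertible. The structural hypothesis is that $R$ is upper triangular.

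First I would observe that for any upper triangular $2\times 2$ matrix, the determinant factors as the product of the diagonal entries, so $\det(R) = r_{1,1}\, r_{2,2}$. Combined with $\det(R) = \det(A)\det(L)\det(B) \neq 0$, this forces both diagonal entries of $R$ to be nonzero. In particular $r_{2,2} \neq 0$, which is the claim.

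There is essentially no obstacle here: the lemma is a one-line rank argument dressed up in the language needed for the updating procedure. The only thing worth mentioning explicitly in the write-up is \emph{why} we bother stating it: in the next step of the Hessenberg-structure restoration, the element $r_{2,2}$ plays the role of a pivot used to chase the spike in the last row, and the lemma guarantees that this pivot never vanishes, so the chasing procedure cannot break down. Neither the triangularity of $L$ nor the specific form of $A$, $B$ is used beyond invertibility; the hypothesis that $L$ is lower triangular is in fact irrelevant to the conclusion and is kept only to match the situation in which the lemma will be applied.
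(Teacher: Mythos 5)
Your argument is correct and essentially identical to the paper's: the paper notes $\operatorname{rank}(R)=\operatorname{rank}(ALB)=\operatorname{rank}(L)=2$ since $A,B$ are nonsingular, and concludes that an upper triangular matrix of full rank must have $r_{2,2}\neq 0$; your determinant phrasing is the same one-line invertibility argument.
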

\begin{proof}
	Since $A,B$ are nonsingular, $\textrm{rank}(R)=\textrm{rank}(ALB) = \textrm{rank}(L) = 2$ \cite[p.13]{HoJo85}, and since $R$ is upper triangular, it can only have full rank if $r_{2,2}\neq0$. \qed
\end{proof}

\begin{lemma}\label{lemma:Hess_pencil}
	Let $(\widehat{H},\widehat{K})\in \mathbb{C}^{(m+1)\times(m+1)}$ be a proper Hessenberg pencil and $P_1$ the plane rotation of Lemma \ref{lemma:P1}.
	Then there exist $P_l\in  \mathfrak{P}_l$, $l = 2,3,\dots, m$ and $\dot{P}_k\in  \mathfrak{P}_k$, $k = 1,2,\dots, m-1$ such that
	\begin{equation*}
	\prod_{l=m}^{1} P_l \widehat{H} \prod_{k=1}^{m-1} \dot{P}_k \textrm{ and }\prod_{l=m}^{1}P_l \widehat{K} \prod_{k=1}^{m-1} \dot{P}_k
	\end{equation*}
	are Hessenberg matrices where the subdiagonal ratios of their principal $m\times m$ submatrices, the poles from the pencil $(H,K)$, are preserved.
\end{lemma}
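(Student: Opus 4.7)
The approach is an inductive bulge-chasing procedure that zeros out the spurious nonzero entries in the last row of $P_1\widehat{H}$ and $P_1\widehat{K}$ one column at a time. At the $k$th step ($k=1,\dots,m-1$) I would apply the right rotation $\dot{P}_k\in\mathfrak{P}_k$ followed by the left rotation $P_{k+1}\in\mathfrak{P}_{k+1}$, with parameters chosen so that after this pair of operations the $(m+1,k)$ entries of both members of the pencil vanish, while the Hessenberg pattern in the first $k$ columns is preserved.

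Concretely, $\dot{P}_k$ acts on columns $k$ and $m+1$; its two parameters are chosen so that the $(m+1,k)$ entry of one of the matrices, say $\widehat{K}$, is annihilated by combining columns $k$ and $m+1$. This typically leaves a residual ``bulge'' at position $(m+1,k)$ in $\widehat{H}$. The subsequent left rotation $P_{k+1}$, acting on rows $k+1$ and $m+1$, is then chosen to eliminate this residual entry by combining the two rows. Because $\dot{P}_k$ only touches columns $k$ and $m+1$ and $P_{k+1}$ only touches rows $k+1$ and $m+1$, the first $k-1$ subdiagonal positions (encoding the poles $\xi_1,\dots,\xi_{k-1}$) are not modified; moreover the same rotation is applied to both $\widehat{H}$ and $\widehat{K}$, so the proportionality of their subdiagonal entries, and hence the poles, is automatically preserved.

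Non-vanishing of the pivots used at each iteration follows from Lemma \ref{lemma:NoBreakdown}, applied to the $2\times 2$ subpencil consisting of the entries in rows $\{k+1,m+1\}$ and columns $\{k,m+1\}$. Properness of the initial pencil $(\widehat{H},\widehat{K})$ guarantees that the relevant lower-triangular $2\times 2$ block has full rank, and the previously applied (invertible) plane rotations preserve this property. Consequently the chasing procedure runs to completion in exactly $m-1$ steps, and the first $m-1$ subdiagonal ratios of the resulting pencil equal $\xi_1,\dots,\xi_{m-1}$, as claimed.

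The main obstacle is the combinatorial bookkeeping needed to verify that the pair $(\dot{P}_k,P_{k+1})$ really does produce the two targeted zeros in $\widehat{H}$ and $\widehat{K}$ simultaneously, without re-introducing nonzeros in the already-cleaned columns of the last row and without disturbing the previously preserved poles. The argument hinges on the fact that exactly one rotation (on each side) is free at each step and is used to impose exactly one linear condition, while the pencil structure forces the twin condition to be satisfied automatically thanks to Lemma \ref{lemma:NoBreakdown}.
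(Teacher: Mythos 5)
There is a genuine gap, and it is exactly at the point you flag as ``bookkeeping.'' Your scheme chooses the right rotation $\dot{P}_k$ (columns $k$ and $m+1$) to annihilate the $(m+1,k)$ entry of $\widehat{K}$, and then the left rotation $P_{k+1}$ (rows $k+1$ and $m+1$) to annihilate the residual $(m+1,k)$ entry of $\widehat{H}$. But that left rotation replaces row $m+1$ by a combination of rows $k+1$ and $m+1$, and row $k+1$ of $K$ carries the nonzero subdiagonal entry $k_{k+1,k}$; hence the very rotation that kills the bulge in $H$ generically re-creates a nonzero at $(m+1,k)$ in $K$. Lemma \ref{lemma:NoBreakdown} does not rescue this: it only guarantees a nonzero $(2,2)$ entry of an upper-triangular product (so the next pivot does not vanish); it imposes no ``twin condition'' forcing the second zero. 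For the same reason your claim that the poles are preserved ``automatically'' because the same rotations hit both matrices is false: after your left rotation the new subdiagonal entries of $H$ and $K$ in column $k$ are different linear combinations (one involving the surviving bulge, one not), so their ratio is no longer $\xi_k$ in general.

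The missing idea, which is the heart of the paper's proof, is to use the right rotation not to annihilate anything but to create \emph{colinearity}: working in the $2\times 2$ blocks $H^b=\left[\begin{smallmatrix}\delta & 0\\ \gamma & \eta\end{smallmatrix}\right]$, $K^b=\left[\begin{smallmatrix}\beta & 0\\ \alpha & \epsilon\end{smallmatrix}\right]$ (rows $k+1,m+1$, columns $k,m+1$), one picks $\dot{P}_k$ so that the first column of $M=\delta K^b-\beta H^b$ is mapped to zero, which makes the first columns of $H^b\dot{P}^b_k$ and $K^b\dot{P}^b_k$ parallel. A single left rotation $P_{k+1}$ then zeroes the $(m+1,k)$ entries of \emph{both} matrices simultaneously, and the surviving entries are $h=u\delta$, $k=u\beta$ for a common factor $u$, so the ratio $\xi_k=\delta/\beta$ is preserved. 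Only after this construction does Lemma \ref{lemma:NoBreakdown} enter, to show the new subdiagonal pivots are nonzero so the sweep can continue (with a separate direct argument when $\delta$ or $\beta$ vanishes). Without the colinearity step your induction does not close.
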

\begin{proof}
	Starting from $(P_1 \widehat{H},P_1\widehat{K})$, the elements of the last row are annihilated using plane rotations.
	Assume, without loss of generality, that the Hessenberg structure is restored up to column $i$, i.e., let $\alpha, \gamma$ denote the $(m+1,i+1)$th element, $\beta, \delta$ the $(i+2,i+1)$th element and $\epsilon,\eta$ the $(m+1,m+1)$th element of $\prod_{l=i+1}^{1} P_l \widehat{K} \prod_{k=1}^{i}\dot{P}_k$ and $\prod_{l=i+1}^{1} P_l \widehat{H} \prod_{k=1}^{i}\dot{P}_k$, respectively,
	\begin{align*}
	\prod_{l=i+1}^{1} P_l \widehat{H} \prod_{k=1}^{i}\dot{P}_k &=:  \begin{bmatrix}
	\widetilde{H}^{(i+1)\times i} & \boldsymbol{\times} & M_H & \boldsymbol{\times}\\
	& \delta e_1 & B_H & \boldsymbol{0}\\
	& \gamma& \boldsymbol{\times}^\top & \eta
	\end{bmatrix}, \\
	\prod_{l=i+1}^{1} P_l \widehat{K} \prod_{k=1}^{i}\dot{P}_k &=:  \begin{bmatrix}
	\widetilde{K}^{(i+1)\times i} & \boldsymbol{\times} & M_K & \boldsymbol{\times}\\
	& \beta e_1 & B_K & \boldsymbol{0}\\
	& \alpha& \boldsymbol{\times}^\top & \epsilon
	\end{bmatrix},
	\end{align*}
	In this expression $\widetilde{H}^{(i+1)\times i}$, $\widetilde{K}^{(i+1)\times i}$, respectively, are the principal submatrices of size ${(i+1)\times i}$ of the solution $(\widetilde{H}$,$\widetilde{K})$, $M_H, M_K\in  \mathbb{C}^{(i+1)\times (m-i-1)}$ are generically full matrices, and $B_H,B_K \in \mathbb{C}^{(m-i-1)\times (m-i-1)}$ are Hessenberg matrices. The zero vector, $\boldsymbol{0}$, and a vector containing generic nonzero elements, $\boldsymbol{\times}$, are assumed to be of appropriate size.\\		
	The elements $\alpha,\gamma$ must be eliminated, this is achieved in two steps.
	Since plane rotations are used, the relevant elements can be isolated in an equivalent $2\times 2$ problem, i.e. find parameters $a,b$ and $c,d$ appearing in $P_{i+2}$ and $\dot{P}_{i+1}$, respectively, such that
	\begin{align*}
	P_{i+2}^b H^b \dot{P}_{i+1}^b & :=
	\renewcommand{\arraystretch}{1.2}
	\begin{bmatrix}
	\bar{a} &-\bar{b}\\
	b & a
	\end{bmatrix} 
	\renewcommand{\arraystretch}{1.2}
	\begin{bmatrix}
	\delta & 0\\
	\gamma & \eta
	\end{bmatrix}
	\renewcommand{\arraystretch}{1.2}
	\begin{bmatrix}
	\bar{c} &-\bar{d}\\
	d & c
	\end{bmatrix} =
	\renewcommand{\arraystretch}{1.2}
	\begin{bmatrix}
	h & \times\\
	0 & \times
	\end{bmatrix},\\
	P_{i+2}^b K^b \dot{P}_{i+1}^b &:= 
	\renewcommand{\arraystretch}{1.2}
	\begin{bmatrix}
	\bar{a} &-\bar{b}\\
	b & a
	\end{bmatrix} \begin{bmatrix}
	\beta & 0\\
	\alpha & \epsilon
	\end{bmatrix}
	\renewcommand{\arraystretch}{1.2}
	\begin{bmatrix}
	\bar{c} &-\bar{d}\\
	d & c
	\end{bmatrix} = 
	\renewcommand{\arraystretch}{1.2}
	\begin{bmatrix}
	k & \times\\
	0 & \times
	\end{bmatrix}
	\end{align*} 
	with $\frac{h}{k} = \frac{\delta}{\beta} = \xi_{i+1}$ and the superscript $b$, for \emph{block}.
	First $\dot{P}_{i+1}^b$ is constructed such that $M \dot{P}_{i+1}^b e_1 = \begin{bmatrix}
	0 & 0
	\end{bmatrix}^\top$ with
	\begin{equation*}
	M = \delta K^b - \beta H^b = \begin{bmatrix}
	0 & 0\\
	\times & \times
	\end{bmatrix}.
	\end{equation*}
	This results in $\textrm{rank}\left(\begin{bmatrix}
	H^b\dot{P}_{i+1}^b e_1 &K^b\dot{P}_{i+1}^b e_1
	\end{bmatrix}\right)=1$, i.e., first columns of $H^b \dot{P}_{i+1}^b$ and $K^b \dot{P}_{i+1}^b$ are colinear (also called parallel).
	Second, $P_{i+2}^b$ is chosen to make $H^b\dot{P}_{i+1}^b$ (or $K^b\dot{P}_{i+1}^b$) upper triangular, i.e., $P_{i+2}^b H^b\dot{P}_{i+1}^b e_1 = \begin{bmatrix}
	k & 0
	\end{bmatrix}^\top$. Thanks to the colinearity, the same plane rotation $P_{i+2}^b$ results in $P_{i+2}^b H^b\dot{P}_{i+1}^b= \begin{bmatrix}
	h & 0
	\end{bmatrix}^\top$.
	For some nonzero constant $u$, $h=u\delta$ and $k = u\beta$ and therefore the ratio of subdiagonal elements $\frac{h}{k} = \frac{u\delta}{u\beta} = \xi_{i+1}$ is preserved.
	By Lemma \ref{lemma:NoBreakdown} the elements $e_2^\top P_{i+2}^b H^b\dot{P}_{i+1}^b e_1$ and $e_2^\top P_{i+2}^b K^b\dot{P}_{i+1}^b e_1$ are nonzero. Hence, this process can be repeated until $i=m-2$.
	If $\delta = 0$ (or $\beta=0$, never both), then $\textrm{rank}(H^b) = 1$ (or $\textrm{rank}(K^b) = 1$)  and the lemma does not hold, however straightforward computation shows that, in this case, $e_2^\top P_{i+2}^b H^b\dot{P}_{i+1}^b e_1 \neq 0$ (or $e_2^\top P_{i+2}^b K^b\dot{P}_{i+1}^b e_1\neq 0$). \qed
\end{proof}
Note that $\dot{P}_l$, $l = 1,2,\dots,m-1$ do not have to be unitary as in the above proof, they only have to be nonsingular for the proof to hold. However, unitary $\dot{P}_l$ are preferred for numerical computations.
The required structure, a Hessenberg pencil, is obtained by the matrices in Lemma \ref{lemma:Hess_pencil}.	
However the ratio of their last subdiagonal elements will not necessarily equal the new pole $\xi_m$. 
Lemma \ref{lemma:poles_Hess} shows that the correct pole can be introduced.
\begin{lemma}\label{lemma:poles_Hess}
	Consider $\xi_m\in\mathbb{C}$ and let $\mu, \nu$, and $\epsilon,\eta$ denote, respectively, the elements on positions $(m+1,m)$ and $(m+1,m+1)$ in the matrices from Lemma~\ref{lemma:Hess_pencil},
	\begin{align}
	\prod_{l=m}^{1} P_l \widehat{H} \prod_{k=1}^{m-1} \dot{P}_k &=  \begin{bmatrix}
	\widetilde{H}^{m\times (m-1)} &\boldsymbol{\times}& \boldsymbol{\times}\\
	& \mu& \epsilon
	\end{bmatrix},\nonumber\\
	\prod_{l=m}^{1}P_l \widehat{K} \prod_{k=1}^{m-1} \dot{P}_k &= \begin{bmatrix}
	\widetilde{K}^{m\times (m-1)} &\boldsymbol{\times}& \boldsymbol{\times}\\
	& \nu& \eta
	\end{bmatrix}\label{eq:almost_HPIEP}.
	\end{align}
	Then $\dot{P}_m\in\mathfrak{P}_m$, with parameters $\dot{a},\dot{b}$, exists such that 
	\begin{align*}
	\begin{bmatrix}
	\mu& \epsilon
	\end{bmatrix} \begin{bmatrix}
	\bar{c}\\
	d 
	\end{bmatrix} =	\hat{h} \quad \textrm{and}\quad
	\begin{bmatrix}
	\nu& \eta
	\end{bmatrix} \begin{bmatrix}
	\bar{c}\\
	d 
	\end{bmatrix} = \hat{k}, \quad \text{with } \frac{\hat{h}}{\hat{k}} = \xi_{m}.
	\end{align*}
\end{lemma}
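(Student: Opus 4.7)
The plan is to reduce the construction of $\dot{P}_m$ to an elementary linear algebra problem on $\mathbb{C}^2$. Since $\dot{P}_m$ acts only on columns $m$ and $m+1$, applying it from the right to the Hessenberg matrices in \eqref{eq:almost_HPIEP} preserves their Hessenberg shape; hence the only content to establish is the existence of parameters $\dot{a}, \dot{b}$ with $|\dot{a}|^2 + |\dot{b}|^2 = 1$ that realize the prescribed subdiagonal ratio.

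Writing the new pole as $\xi_m = \nu_m/\mu_m$ with $(\mu_m, \nu_m) \neq (0, 0)$, the identities
\begin{equation*}
\hat{h} = \mu\,\bar{\dot{a}} + \epsilon\,\dot{b}, \qquad \hat{k} = \nu\,\bar{\dot{a}} + \eta\,\dot{b},
\end{equation*}
reduce the condition $\mu_m \hat{h} = \nu_m \hat{k}$ to the single homogeneous linear equation
\begin{equation*}
(\mu_m \mu - \nu_m \nu)\,\bar{\dot{a}} + (\mu_m \epsilon - \nu_m \eta)\,\dot{b} = 0
\end{equation*}
in the unknown pair $(\bar{\dot{a}}, \dot{b}) \in \mathbb{C}^2$. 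Its solution space has dimension at least one, so a nonzero solution exists and rescaling it to unit length yields the parameters of a plane rotation $\dot{P}_m \in \mathfrak{P}_m$ whose effect on $[\mu, \epsilon]$ and $[\nu, \eta]$ produces $\hat{h}, \hat{k}$ with the required ratio.

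The delicate point I expect to be the main obstacle is the degenerate case in which the chosen solution $(\bar{\dot{a}}, \dot{b})$ simultaneously annihilates both $\hat{h}$ and $\hat{k}$, leaving the ratio undefined. This would force the vector to lie in the kernel of the full $2 \times 2$ block $\bigl[\begin{smallmatrix}\mu & \epsilon\\ \nu & \eta\end{smallmatrix}\bigr]$. When this block is nonsingular, its kernel is trivial and no issue arises; when it is singular but nonzero, the one-dimensional solution space of the above equation expands to two dimensions, leaving room to select a solution outside the one-dimensional kernel. The pathological case in which the entire bottom-right block vanishes is excluded by the properness of $(\widehat{H}, \widehat{K})$, which is inherited from the proper pencil $(H, K)$ of Problem \ref{def:HPIEP} and preserved by the unitary transformations applied in Lemma \ref{lemma:Hess_pencil}.
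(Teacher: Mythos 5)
Your reduction to the single homogeneous equation $(\mu_m\mu-\nu_m\nu)\,\bar{\dot a}+(\mu_m\epsilon-\nu_m\eta)\,\dot b=0$ is the right move, but your handling of the degenerate case---which you yourself flag as the crux---is incorrect. A single linear equation in $(\bar{\dot a},\dot b)\in\mathbb C^2$ has a two-dimensional solution space only when \emph{both} coefficients vanish, i.e.\ only when $(\mu,\epsilon)=\xi_m(\nu,\eta)$; mere singularity of $M=\bigl[\begin{smallmatrix}\mu&\epsilon\\ \nu&\eta\end{smallmatrix}\bigr]$ does not give this. If the rows of $M$ are proportional with ratio $c\neq\xi_m$, say $(\mu,\epsilon)=c\,(\nu,\eta)\neq(0,0)$, then the coefficient vector equals $(\mu_m c-\nu_m)(\nu,\eta)\neq 0$, so the solution space stays one-dimensional and coincides with $\ker M$: every admissible $(\bar{\dot a},\dot b)$ yields $\hat h=\hat k=0$, and in fact no plane rotation can succeed, since $\hat h=c\,\hat k$ identically, so the ratio cannot be moved to $\xi_m$. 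A concrete instance is $\epsilon=\eta=0$ with $\mu,\nu\neq0$ and $\mu/\nu\neq\xi_m$; your properness argument only excludes $(\mu,\nu)=(0,0)$, i.e.\ $M=0$, and does not rule this out.

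This is precisely where the paper's proof leans on an ingredient you never use: by Lemma \ref{lemma:NoBreakdown}, carried through the reduction of Lemma \ref{lemma:Hess_pencil}, the trailing elements satisfy $\epsilon\neq0$ and $\eta\neq0$, and this nonvanishing together with properness of the pencil is what the paper offers to guarantee that suitable parameters exist. Equivalently, what the argument really needs is that the map $(\bar{\dot a}:\dot b)\mapsto(\hat h:\hat k)$ is a nonconstant (M\"obius) map, i.e.\ $\mu\eta-\epsilon\nu\neq0$, or at worst that the proportional case occurs only with ratio exactly $\xi_m$. To repair your proposal you must first establish $\epsilon,\eta\neq0$ via Lemma \ref{lemma:NoBreakdown} and then address the nonsingularity of $M$ (note that $\epsilon,\eta\neq0$ immediately settles the sub-cases $\mu=0$ or $\nu=0$), rather than relying on the dimension count, which fails exactly in the problematic configuration.
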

\begin{proof}
	Since $\epsilon$ and $\eta$ are both nonzero by Lemma \ref{lemma:NoBreakdown} and the matrices from \eqref{eq:almost_HPIEP} form a proper Hessenberg pencil, suitable $c$ and $d$ exist. \qed
\end{proof}

\subsection{Updating tridiagonal pencil IEP}\label{sec:TPIEP_Update}
An updating procedure for Problem \ref{def:TPIEP} is formulated. The ideas are the same as in Section \ref{sec:HPIEP_update}. However, it is no longer possible to use unitary similarity transformations. Instead of plane rotations, \emph{eliminators} will be used, which are essentially $2\times 2$ triangular matrices.
Let $\mathfrak{L}_i$ and $\mathfrak{R}_i$, respectively, denote the classes of lower and upper triangular eliminators, i.e., classes composed, respectively, of matrices of the form
\begin{equation}\label{eq:eliminators}
L_i := \begin{bmatrix}
I_{i-1}     \\
&    1 &  &  \\
&     & I_{m-i} \\
& a_i  & & 1 \\
\end{bmatrix}
\text{ and } 
R_i := \begin{bmatrix}
I_{i-1}     \\
&    1 &  &  b_i\\
&     & I_{m-i} \\
&  & & 1 \\
\end{bmatrix},
\end{equation} 
with parameters $a_i,b_i\in \mathbb{C}$.
The elements on position $(i,i)$ and $(m+1,m+1)$ allow other choices than the value $1$, and an appropriate choice for them is paramount to the development of an effective numerical procedure.
However for clarity of the following exposition they are fixed at the value $1$.
Moreover, in this updating procedure, breakdowns can occur. 
For simplicity of the exposition we assume that no breakdowns occur.
To initiate the updating procedure, suppose we possess the solution to the tridiagonal pencil IEP for nodes $Z = \textrm{diag}(\{z_i \}_{i=1}^{m})$, weights $v = \begin{bmatrix}
v_1 & v_2 & \dots & v_m 
\end{bmatrix}^\top$, $w = \begin{bmatrix}
w_2 & w_2 & \dots & w_m 
\end{bmatrix}^\top$ and poles $\Xi = \{\xi_i\}_{i=1}^{m-1}$, $\Psi = \{\psi_i\}_{i=1}^{m-1}$. 
This solution consists of biorthonormal matrices $V,W\in \mathbb{C}^{m\times m}$ and a tridiagonal pencil $(T,S)\in \mathbb{C}^{m\times m}$ satisfying, for $w^H v =  \bar{\eta}\nu$,
\begin{equation}\label{eq:TPIEP_original}
W^H Z V S = T, \quad V^H w= \eta e_1, \quad W^H v= \nu e_1.
\end{equation}
Next, a node $z_{m+1}$, weights $v_{m+1},w_{m+1}$ and poles $\xi_m,\psi_m$ are added to the original problem.
The new problem consists of nodes $\widetilde{Z} = \textrm{diag}(\{z_i \}_{i=1}^{m+1})$, weights $\tilde{v} = \begin{bmatrix}
v_1 & v_2 & \dots & v_{m+1}
\end{bmatrix}^\top$, $\tilde{w} = \begin{bmatrix}
w_1 & w_2 & \dots & w_{m+1}
\end{bmatrix}^\top$ and poles $\widetilde{\Xi} = \{\xi_i \}_{i=1}^{m}$, $\widetilde{\Psi} = \{\psi_i \}_{i=1}^{m}$.
The biorthonormal matrices $V,W\in \mathbb{C}^{m\times m}$ are embedded while preserving their biorthonormality
\begin{equation}\label{eq:embeddedVW}
\widehat{V} := \begin{bmatrix}
V\\
& 1
\end{bmatrix}, \qquad
\widehat{W} := \begin{bmatrix}
W\\
& 1
\end{bmatrix}.
\end{equation}
And for $\hat{s}_{m+1}$ and $\hat{t}_{m+1}$ satisfying $z_{m+1} \hat{s}_{m+1} = \hat{t}_{m+1}$, define $\widehat{T}$ and $\widehat{S}$ as 
\begin{equation}\label{eq:embeddedTS}
\widetilde{Z} \widehat{V} \underbrace{\begin{bmatrix}
	S\\
	& \hat{s}_{m+1}
	\end{bmatrix}}_{=:\widehat{S}} = \widehat{V} \underbrace{\begin{bmatrix}
	T\\
	& \hat{t}_{m+1}
	\end{bmatrix}}_{=:\widehat{T}}.
\end{equation}
Theorem \ref{theorem:TPIEP_solution} states how a solution can be efficiently obtained from the embedded matrices \eqref{eq:embeddedVW}, \eqref{eq:embeddedTS} using eliminators.
The remainder of this section is dedicated to providing the components required to prove this theorem.
\begin{theorem}\label{theorem:TPIEP_solution}
	Let $V, W\in \mathbb{C}^{m\times m}$, $(T,S)\in \mathbb{C}^{m\times m}$ be the solution to a tridiagonal pencil IEP (Problem \ref{def:TPIEP}) of size $m$ and let $\widehat{V},\widehat{W}, \widehat{T},\widehat{S}$ denote the corresponding embedded matrices \eqref{eq:embeddedVW}, \eqref{eq:embeddedTS}. Then there exist a diagonal matrix $D$, a nonsingular matrix $C$, eliminators $L_i\in \mathfrak{L}_i$, $R_i\in \mathfrak{R}_i$ , $i=1,2,\dots, m$ and $\dot{L}_j\in \mathfrak{L}_j$, $\dot{R}_j\in \mathfrak{R}_j$, $j=1,2,\dots, m$ such that
	\begin{align*}
	\widetilde{T} &= \left(\prod_{k=m}^{2}R_k \dot{L}_{k} \right) R_1 D \dot{L}_1 \widehat{T} C L_1  \left(\prod_{k=2}^{m}\dot{R}_{k} L_k  \right),\\
	\widetilde{S} &= \left(\prod_{k=m}^{2}R_k \dot{L}_{k} \right) R_1 D \dot{L}_1 \widehat{S} C L_1 \left(\prod_{k=2}^{m}\dot{R}_{k} L_k  \right),\\
	\widetilde{V} &= \widehat{V} \dot{L}_1 D^{-1} R_1^{-1} \left(\prod_{k=2}^{m}\dot{L}_{k}^{-1} R_k^{-1} \right) \text{ and}\quad 		\widetilde{W} = \widehat{W} \dot{L}_{1}^{H} D^H R_1^{H} \left(\prod_{k=2}^{m} \dot{L}_{k}^{H} R_k^{H} \right),
	\end{align*}
	solve the IEP of size $m+1$.
	This IEP is obtained by adding a node $z_{m+1}$, weights $v_{m+1},w_{m+1}$ and poles $\xi_m,\psi_m$ to the original IEP of dimension $m$.
\end{theorem}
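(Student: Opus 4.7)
The plan is to follow the three-stage template laid out at the start of Section \ref{sec:sol_update}, adapting the unitary chasing of Section \ref{sec:HPIEP_update} to the biorthogonal setting where only nonsingular (eliminator) transformations are available. The key observation is that if $\widetilde{V} = \widehat{V} M$ and $\widetilde{W} = \widehat{W} N$ with $N^{H} M = I$, then biorthogonality is preserved and the embedded similarity $\widetilde{Z} \widehat{V} \widehat{S} = \widehat{V} \widehat{T}$ transforms into $\widetilde{Z} \widetilde{V} \widetilde{S} = \widetilde{V} \widetilde{T}$ with $\widetilde{T} = M^{-1} \widehat{T} M'$ and $\widetilde{S} = M^{-1} \widehat{S} M'$ for an auxiliary right factor $M'$. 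The proof therefore amounts to exhibiting $M$, $N$, and $M'$ as the specified products of eliminators, the diagonal factor $D$, and the matrix $C$, and to verifying that every elementary step is realizable.

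First I would check that the embedded matrices from \eqref{eq:embeddedVW} and \eqref{eq:embeddedTS} preserve biorthogonality $\widehat{W}^{H} \widehat{V} = I$ and the similarity $\widetilde{Z} \widehat{V} \widehat{S} = \widehat{V} \widehat{T}$, while the enlarged weights satisfy $\widehat{W}^{H} \tilde{v} = \nu e_{1} + v_{m+1} e_{m+1}$ and $\widehat{V}^{H} \tilde{w} = \eta e_{1} + w_{m+1} e_{m+1}$, violating the orthogonality condition of Problem \ref{def:TPIEP}. The role of $R_{1}, L_{1}, D, \dot{L}_{1}$ (together with $C$ on the pencil side) is to combine the first and last columns, respectively rows, of $\widehat{V}$ and $\widehat{W}$ so as to zero the trailing components while preserving biorthonormality; this is the direct analogue of Lemma \ref{lemma:P1}, with the diagonal $D$ absorbing the normalization role played by unitarity in the HPIEP case. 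After this step the transformed pencil has a spike in its last row and last column, and the tridiagonal structure is lost.

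For the structure condition these spikes are then chased toward the lower-right corner by a sequence of local $2\times 2$ transformations indexed by $k = 2, \dots, m$: the pair $(\dot{L}_{k}, R_{k})$ acts from the left while $(L_{k}, \dot{R}_{k})$ acts from the right, and in the spirit of Lemma \ref{lemma:Hess_pencil} their parameters are chosen simultaneously to (i) annihilate the current spike entry, (ii) preserve the prescribed pole ratios $t_{k, k+1}/s_{k, k+1} = \xi_{k}$ and $t_{k+1, k}/s_{k+1, k} = \bar{\psi}_{k-1}$ at the newly created sub- and superdiagonal positions, and (iii) maintain biorthonormality, which forces the left-acting chasers on the pencil to be matched by $\dot{L}_{k}^{H}, R_{k}^{H}$ on the $\widehat{W}$ side and accounts for the form of $\widetilde{W}$ in the statement. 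The final pair of eliminators at index $m$ plays the role of Lemma \ref{lemma:poles_Hess}, placing the added poles at positions $(m, m+1)$ and $(m+1, m)$ of the new pencil.

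The main obstacle is to show that each eliminator in the chasing stage is well-defined, i.e.\ that the pivots used to solve the local problems are nonzero. The rank argument of Lemma \ref{lemma:NoBreakdown} still applies, since it only exploits nonsingularity of the surrounding factors and does not rely on unitarity; however, unlike the HPIEP case, there is no unitary safeguard preventing an unlucky cancellation in the spike itself, which is precisely the mechanism of a Lanczos-type breakdown. Under the explicit no-breakdown assumption stated at the beginning of Section \ref{sec:TPIEP_Update}, all pivots remain nonzero and the chasing completes, yielding a tridiagonal pencil $(\widetilde{T}, \widetilde{S})$ with the correct poles together with biorthonormal bases $\widetilde{V}, \widetilde{W}$ matching the enlarged weight vectors. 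That these indeed solve the $(m+1)$-dimensional TPIEP then follows from the essential uniqueness of biorthonormal nested bases established in Theorem \ref{theorem:uniqueBases}.
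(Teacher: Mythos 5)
Your proposal follows essentially the same route as the paper: the three-stage update with eliminators, enforcing the orthogonality condition via $D$, $\dot{L}_1$, $R_1$ (the paper's Lemma \ref{lemma:update_TP_C}), restoring the tridiagonal structure by a colinearity-based chasing of the spike entries under the no-breakdown assumption (Lemma \ref{lemma:recover_TP}), and introducing the new poles with the final eliminators (Lemma \ref{lemma:poles_tridiag}). The only deviation is your closing appeal to Theorem \ref{theorem:uniqueBases}, which is unnecessary—the construction verifies the conditions of Problem \ref{def:TPIEP} directly—but harmless.
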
	
The matrices appearing in Theorem \ref{theorem:TPIEP_solution} can be determined by taking the orthogonality and structure conditions into account.
The embedded matrices $\widehat{V},\widehat{W}$ are, in general, not orthogonal to the weight vectors, i.e., for the new weights $v_{m+1}, w_{m+1} \neq 0$,
\begin{align}\label{eq:embed_weight_orthog}
\widehat{V}^H \tilde{w} =  \eta e_1 + w_{m+1} e_{m+1},\qquad
\widehat{W}^H \tilde{v} =  \nu e_1 + v_{m+1} e_{m+1}.
\end{align} 
Lemma \ref{lemma:update_TP_C} states that, starting from the biorthogonal bases $\widehat{V},\widehat{W}$, another pair of biorthogonal bases, that satisfies the orthogonality condition, can be constructed efficiently.
\begin{lemma}\label{lemma:update_TP_C}
	Let $\widehat{V},\widehat{W}$ be the embedded matrices \eqref{eq:embeddedVW} and $\tilde{v},\tilde{w}$ the weight vectors of the considered IEP of size $m+1$.
	Then there exist a diagonal matrix $D$, $\dot{L}_1\in \mathfrak{L}_1$ and $R_1 \in \mathfrak{R}_1$
	such that $R_1^{-H} D^{-H}\dot{L}_1^{-H} \widehat{V}^H \tilde{w} = \hat{\eta} e_1$ and $R_1 D^H \dot{L}_1 \widehat{W}^H \tilde{v} = \hat{\nu} e_1$, with $\hat{\nu}\bar{\hat{\eta}}=\tilde{w}^H \tilde{v}$.
\end{lemma}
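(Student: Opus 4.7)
The plan is to reduce the full $(m+1)\times(m+1)$ problem to a $2\times 2$ problem on the first and $(m+1)$th coordinates. By the original orthonormality relations \eqref{eq:TPIEP_original} and the embedding \eqref{eq:embeddedVW}, equation \eqref{eq:embed_weight_orthog} already records $\widehat{V}^H \tilde{w} = \eta e_1 + w_{m+1} e_{m+1}$ and $\widehat{W}^H \tilde{v} = \nu e_1 + v_{m+1} e_{m+1}$. The matrices $\dot{L}_1$, $R_1$ and the diagonal $D$ all act as the identity on the coordinates indexed by $2,\ldots,m$, so both target equations decouple into $2\times 2$ problems involving only the parameter $\dot{a}_1$ of $\dot{L}_1$, the parameter $b_1$ of $R_1$, and the two relevant diagonal entries $d_1, d_{m+1}$.

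First I would solve the second identity. Annihilating the $(m+1)$th component of $R_1 D^H \dot{L}_1 \widehat{W}^H \tilde{v}$ forces $\dot{a}_1 = -v_{m+1}/\nu$, which is well defined since $\nu \neq 0$ (the original problem requires $w^H v = \bar{\eta}\nu \neq 0$). With this choice the remaining first component gives $\hat{\nu} = \bar{d}_1 \nu$. Next, using the now fixed $\dot{a}_1$, annihilating the $(m+1)$th component in $R_1^{-H} D^{-H}\dot{L}_1^{-H}\widehat{V}^H\tilde{w}$ determines $b_1$ via $\bar{b}_1 = w_{m+1}\bar{d}_1\bar{\nu}/[\bar{d}_{m+1}(\eta\bar{\nu}+\bar{v}_{m+1}w_{m+1})]$. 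This is well defined because $\eta\bar{\nu}+\bar{v}_{m+1}w_{m+1} = \overline{\tilde{w}^H\tilde{v}}\neq 0$ by the hypothesis on the enlarged IEP, and the surviving first component yields $\hat{\eta} = (\eta\bar{\nu}+\bar{v}_{m+1}w_{m+1})/(\bar{d}_1\bar{\nu})$.

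The main obstacle is the compatibility condition $\hat{\nu}\bar{\hat{\eta}} = \tilde{w}^H \tilde{v}$, which a priori looks like an extra constraint beyond the two equations already solved. Substituting the formulas above and simplifying gives $\hat{\nu}\bar{\hat{\eta}} = (\bar{d}_1/d_1)(\bar{\eta}\nu + \bar{w}_{m+1}v_{m+1})$, which matches $\tilde{w}^H\tilde{v} = \bar{\eta}\nu + \bar{w}_{m+1}v_{m+1}$ exactly when $d_1 \in \mathbb{R}$. Fixing $d_1$ real (for instance $d_1 = 1$) therefore resolves the compatibility issue, while $d_{m+1}$ remains a free parameter that can later be chosen to influence numerical behaviour. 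To conclude, I would assemble the resulting $D$, $\dot{L}_1$, $R_1$ and verify the two claimed identities by direct substitution into the $2\times 2$ reduction.
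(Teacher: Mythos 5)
Your proposal is correct and follows essentially the same route as the paper: choose the eliminator parameters to annihilate the $(m+1)$th components of $\widehat{W}^H\tilde{v}$ and of the transformed $\widehat{V}^H\tilde{w}$, and use the diagonal matrix $D$ to handle the scaling/normalization. You simply make explicit what the paper leaves implicit — the closed-form values of $\dot{a}_1$, $b_1$ (well defined since $\bar\eta\nu=w^Hv\neq0$ and $\tilde{w}^H\tilde{v}\neq0$) and the verification that $\hat{\nu}\bar{\hat{\eta}}=\tilde{w}^H\tilde{v}$ is achieved by taking $d_1$ real while $d_{m+1}$ stays free.
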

\begin{proof}
	Clearly $\dot{L}_1$ can be chosen to eliminate the last element of $\widehat{V}^H \tilde{w}$ and $R_1$ to eliminate the last element of $\widehat{W}^H \tilde{v}$. The diagonal matrix $D$ scales such that $w_{m+1}$ is introduced correctly, relative to the weights in $w$. \qed
\end{proof}

The new bases lead to a representation $(R_1 D \dot{L}_1 \widehat{T},R_1 D \dot{L}_1\widehat{S})$ of $\widetilde{Z}$ which is no longer a tridiagonal pencil
\begin{equation} \label{eq:disturbed_TP}
\widetilde{Z} \widehat{V} \dot{L}_1^{-1} D^{-1} R_1^{-1} 
\left[
\renewcommand{\arraystretch}{1.2}
\begin{array}{ccc|c}
& & & \times\\
& \bar{S} & & 0\\
& & & \boldsymbol{0}\\ \hline
\times & \times & \boldsymbol{0}^\top & \times
\end{array}
\right]
= \widehat{V} \dot{L}_1^{-1} D^{-1} R_1^{-1} 
\left[
\renewcommand{\arraystretch}{1.2}
\begin{array}{ccc|c}
& & & \times\\
& \bar{T} & & 0\\
& & & \boldsymbol{0}\\ \hline
\times & \times & \boldsymbol{0}^\top & \times
\end{array}
\right].
\end{equation}
The matrices $\bar{T}$ and $\bar{S}$ are tridiagonal and only differ from $T$ and $S$, respectively, in the first two rows.
Using eliminators, the tridiagonal structure of the pencil will be enforced, Lemma \ref{lemma:recover_TP} provides the details.
\begin{lemma}\label{lemma:recover_TP}
	Let $(\widehat{T},\widehat{S})$ denote the embedded pencil \eqref{eq:embeddedTS} and $R_1$, $\dot{L}_1$ the matrices from Lemma \ref{lemma:update_TP_C}.
	Then, under the assumption that no breakdown occurs, there exist a nonsingular matrix $C$, eliminators $L_i\in \mathfrak{L}_i$, $R_i\in \mathfrak{R}_i$, for $i=1,2,\dots, m-1$ and $\dot{L}_j\in \mathfrak{L}_j$, $\dot{R}_j\in \mathfrak{R}_j$, $j=2,3,\dots, m$ such that the following matrices have tridiagonal structure:
	\begin{align}
	\dot{T} := \dot{L}_m \left(\prod_{k=1}^{m-1} R_k \dot{L}_k \right)  \widehat{T} C L_1 \left(\prod_{k=2}^{m-1} \dot{R}_k L_k \right) \dot{R}_m,\nonumber\\
	\dot{S} := \dot{L}_m \left(\prod_{k=1}^{m-1} R_k \dot{L}_k \right)  \widehat{S} C L_1 \left(\prod_{k=2}^{m-1} \dot{R}_k L_k \right) \dot{R}_m.\label{eq:red_to_tridiag}
	\end{align}
	Furthermore, the poles appearing in the pencil $(\widehat{T},\widehat{S})$ are preserved under these operations.
\end{lemma}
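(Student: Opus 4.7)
The plan is to mirror the chase-the-bulge argument of Lemma \ref{lemma:Hess_pencil}, adapted to the two-sided bulge structure of a tridiagonal pencil and to the fact that the transformations are eliminators rather than plane rotations. Starting from the disturbed pencil of \eqref{eq:disturbed_TP}, whose upper-left $m\times m$ block is tridiagonal while the entries at positions $(1,m+1)$, $(m+1,1)$, $(m+1,2)$ and $(m+1,m+1)$ are in general nonzero, I would restore tridiagonality by iteratively pushing the bulges toward the bottom-right while preserving the pole ratios $t_{i,i+1}/s_{i,i+1} = \xi_i$ and $t_{i+1,i}/s_{i+1,i} = \bar{\psi}_{i-1}$.

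For the initial deflation I would combine the nonsingular matrix $C$ on the right with an eliminator $L_1 \in \mathfrak{L}_1$ acting on columns $1$ and $m+1$. The role of $C$ is to create a collinearity between the restrictions of $\widehat{T}$ and $\widehat{S}$ to the subspace spanned by these two columns, so that one subsequent $L_1$ simultaneously annihilates the $(m+1,1)$ entry in both pencil factors; this is the direct analog of the rank-one collinearity argument at the heart of Lemma \ref{lemma:Hess_pencil}. The pole ratio $\xi_1$ at position $(1,2)$ is preserved by the same common-scaling observation used there, and a symmetric left step using $\dot{L}_1$ already supplied by Lemma \ref{lemma:update_TP_C} together with an $R_1 \in \mathfrak{R}_1$ handles the last-column bulge at $(1,m+1)$. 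After this step the row bulge has moved to position $(m+1,2)$ and the column bulge to position $(2,m+1)$.

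The core is then an induction on $i = 2, 3, \ldots, m-1$, with induction hypothesis that the upper-left $(i+1)\times i$ principal submatrices of both pencil factors have been reduced to their final tridiagonal form carrying the correct ratios $\xi_1, \ldots, \xi_{i-1}$ on the superdiagonal and $\bar{\psi}_1, \ldots, \bar{\psi}_{i-2}$ on the subdiagonal. In each step the bulges sit at positions $(m+1,i)$ and $(i,m+1)$. One chooses $\dot{L}_i \in \mathfrak{L}_i$ and $R_i \in \mathfrak{R}_i$ on the left (acting on rows $i$ and $m+1$) to enforce the collinearity and then eliminate the row bulge, and symmetrically $\dot{R}_i \in \mathfrak{R}_i$ and $L_i \in \mathfrak{L}_i$ on the right (acting on columns $i$ and $m+1$) to eliminate the column bulge. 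The collinearity argument forces the new ratios $\bar{\psi}_{i-1}$ and $\xi_i$ to be preserved, and the no-breakdown assumption guarantees that the pivots needed at each step are nonzero, exactly as in Lemma \ref{lemma:NoBreakdown}. The final pair $\dot{L}_m \in \mathfrak{L}_m$ and $\dot{R}_m \in \mathfrak{R}_m$ then closes the procedure by cleaning up the remaining entries at $(m+1,m)$ and $(m,m+1)$, producing the tridiagonal pencil $(\dot{T}, \dot{S})$ displayed in \eqref{eq:red_to_tridiag}.

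The main obstacle is the coordinated bookkeeping at each step: the left and right eliminators at index $i$ must simultaneously clear a row and a column bulge without spoiling the tridiagonal structure already established in the top-left block, and \emph{both} the superdiagonal and subdiagonal pole ratios must be preserved at each position. This is what forces the alternating $\dot{L}$-$R$ and $\dot{R}$-$L$ ordering visible in \eqref{eq:red_to_tridiag}, and also what necessitates the extra nonsingular matrix $C$ at the very start of the reduction rather than another eliminator in $\mathfrak{L}_1 \cup \mathfrak{R}_1$, since a single eliminator cannot realize the initial two-sided collinearity needed to launch the chase for both factors of the pencil at once.
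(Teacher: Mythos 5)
Your high-level plan is indeed the paper's: an induction that chases the row bulge and the column bulge of the disturbed pencil \eqref{eq:disturbed_TP} towards the bottom-right corner, using at each step a cross-factor colinearity (a rank-one condition mixing entries of $\widehat{T}$ and $\widehat{S}$) so that a single eliminator annihilates the same entry in both factors, with the poles preserved because the surviving sub- and superdiagonal entries of $T$ and $S$ are multiplied by the same scalars, and with the no-breakdown assumption guaranteeing the nonvanishing of the pivots $\hat{t}^{(i)},\hat{s}^{(i)}$. However, the concrete roles you assign to the transformations are wrong in ways that would make the construction fail if carried out literally. First, $R_1$ and $\dot{L}_1$ are not at your disposal: by the hypothesis of the lemma they are the fixed eliminators of Lemma \ref{lemma:update_TP_C} enforcing the orthogonality condition, and they are precisely what creates the bulges; they cannot be ``used together'' to remove the $(1,m+1)$ bulge (in the paper that bulge is removed later, by $\dot{R}_2$). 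Second, $C$ only adds a multiple of column $1$ to column $2$, so it cannot make the pair of entries $(m+1,1)$ and $(m+1,m+1)$ of $\widehat{T}_1$ proportional to the corresponding pair of $\widehat{S}_1$, which is what a single right eliminator $L_1$ would need in order to annihilate the $(m+1,1)$ entry of both factors at once. In the paper's first step the free parameters $c$ (of $C$) and $a_1$ (of $L_1$) instead enforce two rank-one conditions, $c$ aligning the $(1,2)$ entries with the $(1,m+1)$ entries and $a_1$ aligning the $(2,1)$ entries with the $(m+1,1)$ entries across the two factors, and the actual annihilations are performed afterwards by $\dot{L}_2$ (row $2$ into row $m+1$) and $\dot{R}_2$ (column $2$ into column $m+1$).

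The same misassignment recurs in your induction step, where it is fatal: you keep both the colinearity setup and the elimination of the row bulge on the left ($\dot{L}_i$, $R_i$) and of the column bulge on the right ($\dot{R}_i$, $L_i$). To clear the $(m+1,i)$ entry of $T$ and $S$ simultaneously by adding a multiple of a band row to row $m+1$, you need the cross-factor proportionality of the relevant column-$i$ entries; a left eliminator $R_i$, which modifies only row $i$, cannot create it, and the two simultaneous annihilation requirements reduce to a homogeneous $2\times 2$ system in the effective parameters that generically has only the trivial (inadmissible) solution. The mechanism that works is cross-sided: the right eliminator $L_i$, mixing column $m+1$ into column $i$, creates the colinearity that lets the left $\dot{L}_{i+1}$ clear the row bulge in both factors, and symmetrically the left $R_i$ prepares the column bulge for the right $\dot{R}_{i+1}$; this is exactly what the matrices $M_l$ and $M_r$ in the paper's proof encode, and it is also why the eliminating (dotted) eliminators carry index $i+1$, so that the bulge moves forward rather than re-filling column $i-1$. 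A smaller slip: the final $\dot{L}_m,\dot{R}_m$ remove the bulges at $(m+1,m-1)$ and $(m-1,m+1)$; the entries $(m+1,m)$ and $(m,m+1)$ are not cleaned up, since they belong to the tridiagonal band and are the ones subsequently reshaped to carry the new poles in Lemma \ref{lemma:poles_tridiag}.
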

\begin{proof}
	The proof is by induction. 
	The first step, $i=1$, differs from the general iteration ($i\geq 2$).
	Consider the matrices $\dot{L}_1,R_1$ from Lemma \ref{lemma:update_TP_C}, and denote their parameter by $\dot{a}_1$, $b_1$, respectively.
	Then the pencil which must be reduced to tridiagonal form is, for some $\hat{t}^{(1)}, \hat{s}^{(1)}$ satisfying $\hat{t}^{(1)} z_{m+1} = \hat{s}^{(1)}$,
	\begin{align*}
	\widehat{T}_1 := R_1 \dot{L}_1 \widehat{T} =\left[
	\renewcommand{\arraystretch}{1.2}
	\begin{array}{c|c|c}
	(1+\dot{a}_1 b_1)t_{11} & \begin{matrix} (1+\dot{a}_1 b_1)t_{12}  &\boldsymbol{0} \end{matrix} & b_1 \hat{t}^{(1)}\\ \hline
	\hat{t}_{21} &  & 0\\
	\boldsymbol{0}& \bar{T}^{(m-1)} & \boldsymbol{0}\\ 
	& &  \\ \hline
	\dot{a}_1 t_{11} & \begin{matrix} \dot{a}_1 t_{12}  & \hfill\hspace{0.5cm} \boldsymbol{0}^\top \end{matrix} & \hat{t}^{(1)}\\
	\end{array}
	\right],\\
	\widehat{S}_1 := R_1 \dot{L}_1 \widehat{S} =\left[
	\renewcommand{\arraystretch}{1.2}
	\begin{array}{c|c|c}
	(1+\dot{a}_1 b_1)s_{11} & \begin{matrix} (1+\dot{a}_1 b_1)s_{12}  &\boldsymbol{0} \end{matrix} & b_1 \hat{s}^{(1)}\\ \hline
	\hat{s}_{21} &  & 0\\
	\boldsymbol{0}& \bar{S}^{(m-1)} & \boldsymbol{0}\\ 
	& &  \\ \hline
	\dot{a}_1 s_{11} & \begin{matrix} \dot{a}_1 s_{12}  & \hfill\hspace{0.5cm} \boldsymbol{0}^\top \end{matrix} & \hat{s}^{(1)}\\
	\end{array}
	\right],
	\end{align*}	
	where $\bar{T}^{(m-1)}$ and $\bar{S}^{(m-1)}$ denote, respectively, the principal trailing submatrix of size $(m-1)\times (m-1)$ of $\bar{T}$ and $\bar{S}$ from $\eqref{eq:TPIEP_original}$.
	Note in the above equation that the matrices are very similar, therefore, we will only explicitly write down $\widehat{T}_i$ and omit $\widehat{S}_i$.
	Similarly as in the proof of Theorem \ref{theorem:solution_HPIEP}, the annihilation of the first element in the last row and last column is performed in two steps.
	The first step creates colinearity between relevant elements in the pencil and the second step eliminates the first elements in the last row (and last column) simultaneously.\\
	First, create colinearity, i.e., find suitable $L_1\in \mathfrak{L}_1$ with parameter $a_1$ and $C := \begin{bmatrix}
	1& c\\
	&1\\
	& & I_{m-1}
	\end{bmatrix}$ which act on $\widehat{T}_1$ as follows
	\begin{equation*}
	\widehat{T}_1 C L_1 =\left[
	\renewcommand{\arraystretch}{1.2}
	\begin{array}{c|c|c}
	\tilde{t}_{11} & \begin{matrix} \tilde{t}_{12} &\boldsymbol{0} \end{matrix} & b_1 \hat{t}^{(1)}\\ \hline
	\tilde{t}_{21} &  & 0\\
	\boldsymbol{0}& \bar{T}^{(m-1)} + c\hat{t}_{21} e_1 e_1^\top & \boldsymbol{0}\\ 
	& &  \\ \hline
	\dot{a}_1 t_{11} + a_1 \hat{t}^{(1)} & \begin{matrix} \dot{a}_1 (t_{12} + ct_{11})  & \hfill\hspace{0.5cm} \boldsymbol{0}^\top \end{matrix} & \hat{t}^{(1)}\\
	\end{array}
	\right],
	\end{equation*}
	where $\tilde{t}_{11}:=(1+\dot{a}_1 b_1)t_{11} + a_1 b_1 \hat{t}^{(1)}$, $\tilde{t}_{12}:=(1+\dot{a}_1 b_1)(t_{12}+ct_{11})$, $\tilde{t}_{21}:=\hat{t}_{21}$.
	The resulting matrix should satisfy the colinearity conditions
	\begin{equation*}
	\textrm{rank}\left(\begin{bmatrix}
	\tilde{t}_{12} & b_1 \hat{t}^{(1)}\\
	\tilde{s}_{12} & b_1 \hat{s}^{(1)}
	\end{bmatrix} \right) = 1,\quad
	\textrm{rank}\left(\begin{bmatrix}
	\tilde{t}_{21} & \tilde{s}_{21}\\
	\dot{a}_1 t_{11} + a_1 \hat{t}^{(1)} &  \dot{a}_1 s_{11} + a_1 \hat{s}^{(1)}
	\end{bmatrix} \right) = 1,
	\end{equation*}
	this can be achieved by choosing appropriate $c$ and $a_1$.
	Next, $\dot{L}_2\in \mathfrak{L}_2$ and $\dot{R}_2\in \mathfrak{R}_2$ eliminate these colinear elements in the last row and last column, i.e., 
	\begin{equation*}
	\dot{L}_2\widehat{T}_1 C L_1 \dot{R}_2 = 	\left[
	\renewcommand{\arraystretch}{1.2}
	\begin{array}{cccc|c}
	\tilde{t}_{11} & 
	\multicolumn{1}{c|}{\tilde{t}_{12}} 
	& 0 & \boldsymbol{0}^\top & 0\\
	\cline{2-5}%-----
	\multicolumn{1}{c|}{\tilde{t}_{21}} & & & & \hat{t}_{2,m+1}\\
	\cline{1-1} 
	\multicolumn{1}{c|}{0} &  \multicolumn{3}{c|}{\bar{T}^{(m-1)} + c\hat{t}_{21} e_1 e_1^\top} & \dot{b}_2 t_{32}\\
	\multicolumn{1}{c|}{\boldsymbol{0}} & & &  & \boldsymbol{0}\\
	\cline{1-5}
	\multicolumn{1}{c|}{0}& \hat{t}_{m+1,2} & \dot{a}_2 t_{23} & \boldsymbol{0}^\top & \hat{t}^{(2)}
	\end{array}
	\right]=:\widehat{T}_2,
	\end{equation*}
	where $\hat{t}_{m+1,2} := \dot{a}_1(t_{12}+ct_{11}) + \dot{a_2}(\hat{t}_{22} + c \hat{t}_{21})$, $\hat{t}_{2,m+1} := \dot{b}_1(\hat{t}_{22}+c\hat{t}_{21})$ and $\hat{t}^{(2)} := (1+\dot{a}_2 b_1)\hat{t}^{(1)} + \dot{b}_2 \dot{a}_1 (t_{12}+c t_{11}) + \dot{b}_2 \dot{a}_2 (\hat{t}_{22}+ c t_{21})$.
	By the colinearity, the same holds for $\widehat{S}_2$.
	This shows that the initial step can be performed using the matrices $C,L_1,\dot{L}_2$ and $\dot{R}_2$.
	Note that this $C$ is only required in the first step, in the subsequent steps it will be replaced by a matrix in $\mathfrak{R}_i$.
	Under the induction hypothesis, we have
	\begin{equation*}
	\widehat{T}_i = \dot{L}_i R_{i-1} \widehat{T}_{i-1} L_{i-1} \dot{R}_i  = \left[
	\renewcommand{\arraystretch}{1.2}
	\begin{array}{cccc|c}
	\widetilde{T}^{(i-1)} & \multicolumn{1}{c|}{\tilde{t}_{i-1,i} e_i}&  &  & \boldsymbol{0}\\
	\cline{2-5}%-----
	\multicolumn{1}{c|}{\tilde{t}_{i,i-1}e_i^\top} & & & & \hat{t}_{i,m+1}\\
	\cline{1-1} 
	&  \multicolumn{3}{|c|}{T^{(m-i+1)}} & \dot{b}_i t_{i+1,i}\\
	\multicolumn{1}{c|}{} & & &  & \boldsymbol{0}\\
	\cline{1-5}
	\multicolumn{1}{c|}{\boldsymbol{0}}& \hat{t}_{m+1,i} & \dot{a}_i t_{i,i+1} & \boldsymbol{0}^\top & \hat{t}^{(i)}
	\end{array}
	\right],
	\end{equation*}
	where $T^{(m-i+1)}\in \mathbb{C}^{(m-i+1)\times (m-i+1)}$, the principal trailing submatrix of $T$ \eqref{eq:TPIEP_original} and $\widetilde{T}^{(i-1)}$ is the $(i-1)\times (i-1)$ leading principal submatrix of the solution $\widetilde{T}$ to the TPIEP.
	All the action takes place in the $(m-i+1)\times (m-i+1)$ principal trailing submatrix of $\widehat{T}_i$.
	For the proof to hold, $\widehat{T}_{i+1} = \dot{L}_{i+1} R_{i} \widehat{T}_{i} L_{i} \dot{R}_{i+1}$ and $\widehat{S}_{i+1}= \dot{L}_{i+1} R_{i} \widehat{S}_{i} L_{i} \dot{R}_{i+1}$ must create zeros on positions $(m+1,i)$ and $(i,m+1)$.
	The first step enforcing colinearity, determining $L_i \in \mathfrak{L}_i$ and $R_i \in \mathfrak{R}_i$, can be elegantly formulated using
	\begin{align*}
	M_l &:= s_{i+1,i}\begin{bmatrix}
	t_{i+1,i} & \dot{b}_i t_{i+1,i}\\
	\hat{t}_{m+1,i} & \hat{t}^{(i)}
	\end{bmatrix} - t_{i+1,i} \begin{bmatrix}
	s_{i+1,i} & \dot{b}_i s_{i+1,i}\\
	\hat{s}_{m+1,i} & \hat{s}^{(i)}
	\end{bmatrix} = \begin{bmatrix}
	0 &0 \\
	\times & \times
	\end{bmatrix},\\
	M_r &:= s_{i,i+1}\begin{bmatrix}
	t_{i,i+1} & \hat{t}_{i,m+1}\\
	\dot{a}_i t_{i,i+1} & \hat{t}^{(i)}
	\end{bmatrix} - t_{i+1,i} \begin{bmatrix}
	s_{i,i+1} & \hat{s}_{i,m+1}\\
	\dot{a}_i s_{i,i+1} & \hat{s}^{(i)}
	\end{bmatrix} = \begin{bmatrix}
	0 & \times \\
	0 & \times
	\end{bmatrix}.
	\end{align*}
	They are constructed such that, isolating their active part in $L^b_i$ and $R^b_i$, they create rank one matrices
	\begin{equation*}
	M_l L^b_i = \begin{bmatrix}
	0 & 0\\
	0 & \times
	\end{bmatrix} \text{ and }
	R^b_i M_r = \begin{bmatrix}
	0 &0 \\
	0 & \times
	\end{bmatrix}.
	\end{equation*}
	Clearly, appropriate $\dot{L}_{i+1}\in \mathfrak{L}_{i+1}$ and $\dot{R}_{i+1}\in \mathfrak{R}_{i+1}$ can be found such that
	\begin{align*}
	\widehat{T}_{i+1} &= \dot{L}_{i+1} R_{i} \widehat{T}_{i} L_{i} \dot{R}_{i+1}  \\
	&= \left[
	\renewcommand{\arraystretch}{1.2}
	\begin{array}{cccc|c}
	\widetilde{T}^{(i)} & \multicolumn{1}{c|}{\tilde{t}_{i,i+1} e_{i+1}}&  &  & \boldsymbol{0}\\
	\cline{2-5}%-----
	\multicolumn{1}{c|}{\tilde{t}_{i+1,i}e_{i+1}^\top}  & & & & \hat{t}_{i+1,m+1}\\
	\cline{1-1} 
	\multicolumn{1}{c|}{}&  \multicolumn{3}{c|}{T^{(m-i)}} & \dot{b}_{i+1}t_{i+2,i+1}\\
	\multicolumn{1}{c|}{}& & &  & \boldsymbol{0}\\
	\cline{1-5}
	\multicolumn{1}{c|}{\boldsymbol{0}} & \hat{t}_{m+1,i+1} & \dot{a}_{i+1} t_{i+1,i+2} & \boldsymbol{0}^\top & \hat{t}^{(i+1)}
	\end{array}
	\right],
	\end{align*}
	with $\hat{t}_{m+1,i+1}=\dot{a}_i t_{i,i+1} + \dot{a}_{i+1}t_{i+1,i+1}$, $\hat{t}^{(i+1)}=\hat{t}^{(i)} + \dot{a}_{i+1} \dot{b}_i t_{i+1,i} + \dot{b}_{i+1} \dot{a}_i t_{i,i+1}$ and $\hat{t}_{i+1,m+1} = \dot{b}_i t_{i+1,i} + \dot{b}_{i+1} t_{i+1,i+1}$.
	Furthermore, $\tilde{t}_{i,i} = t_{i,i} + b_i \hat{t}_{m+1,i} + a_i \hat{t}_{i,m+1}$, $\tilde{t}_{i,i+1} =(1+\dot{a}_i b_i) t_{i,i+1}$ and $\tilde{t}_{i+1,i} = (1+\dot{b}_i a_i)t_{i+1,i}$.
	Same holds for $\widehat{S}_{i+1}$ thanks to the colinearity.
	Hence, the poles are preserved, $ \tilde{t}_{i+1,i}/\tilde{s}_{i+1,i} = t_{i+1,i}/s_{i+1,i} = \xi_i$, $i=1,2,\dots, m-1$ and $ \tilde{t}_{i,i+1}/\tilde{s}_{i,i+1} = t_{i,i+1}/s_{i,i+1} = \bar{\psi}_{i-1}$, $i=2,3,\dots, m-1$.\\
	This can be continued for $i< m$, thereby obtaining at $i=m-1$ a tridiagonal pencil $(\dot{T},\dot{S}) := (\widehat{T}_m,\widehat{S}_m)$.
	Whenever $\hat{t}^{(i)}$ (or $\hat{s}^{(i)}$) vanishes for some $i$, the above transformations cannot be determined, i.e., a breakdown occurs. This situation is excluded by the assumption that no breakdowns occur. \qed
\end{proof}

Lemma \ref{lemma:recover_TP} shows that a tridiagonal pencil $(\dot{T},\dot{S})$ with eigenvalues $z_i$ can be constructed while preserving its poles, i.e., $\frac{\dot{t}_{i+1,i}}{\dot{s}_{i+1,i}} = \xi_i$, $i = 1,2,\dots, m-1$ and $\frac{\dot{t}_{i,i+1}}{\dot{s}_{i,i+1}} = \bar{\psi}_{i-1}$, $i = 2,3,\dots, m-1$.
However, in general $\frac{\dot{t}_{m+1,m}}{\dot{s}_{m+1,m}} \neq \xi_m$ and $\frac{\dot{t}_{m,m+1}}{\dot{s}_{m,m+1}} \neq \bar{\psi}_{m-1}$, which are the poles belonging to the rational functions that are added to the space.\\
The new poles must be introduced in the pencil without disturbing the tridiagonal structure, Lemma \ref{lemma:poles_tridiag} provides the details.
\begin{lemma}\label{lemma:poles_tridiag}
	Consider a tridiagonal pencil $(\dot{T},\dot{S})$, where $\dot{t}_{m+1,m+1},\dot{s}_{m+1,m+1}\neq 0$.
	Premultiplication with $R_m\in \mathfrak{R}_m$ and postmultiplication with $L_m\in \mathfrak{L}_m$ suffice to alter, respectively, the ratio of elements $(m,m+1)$ and $(m+1,m)$ of $(\dot{T},\dot{S})$ to any values $\psi_{m-1}\in\bar{\mathbb{C}}$ and $\xi_m\in\bar{\mathbb{C}}$.
\end{lemma}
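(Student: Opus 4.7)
The plan is to exploit the fact that $R_m$ and $L_m$ act on disjoint parts of the pencil, so that the two ratios at positions $(m,m{+}1)$ and $(m{+}1,m)$ can be adjusted independently. Premultiplication by $R_m$ adds $b_m$ times row $m{+}1$ to row $m$, affecting only that single row; postmultiplication by $L_m$ adds $a_m$ times column $m{+}1$ to column $m$, affecting only that column. Since in a tridiagonal matrix row $m{+}1$ has nonzero entries only at columns $m$ and $m{+}1$, and column $m{+}1$ has nonzero entries only at rows $m$ and $m{+}1$, the modified row $m$ stays supported on columns $m{-}1,m,m{+}1$ and the modified column $m$ stays supported on rows $m{-}1,m,m{+}1$. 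Tridiagonality is therefore preserved, and the other subdiagonal ratios are untouched.

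Next I would reduce each pole-placement to a one-parameter Möbius problem. For the superdiagonal ratio, a direct computation gives
\begin{equation*}
\frac{(R_m \dot T)_{m,m+1}}{(R_m \dot S)_{m,m+1}} \;=\; \frac{\dot t_{m,m+1}+b_m\,\dot t_{m+1,m+1}}{\dot s_{m,m+1}+b_m\,\dot s_{m+1,m+1}},
\end{equation*}
a Möbius function of $b_m$ from $\bar{\mathbb C}$ to $\bar{\mathbb C}$. Because $\dot t_{m+1,m+1}$ and $\dot s_{m+1,m+1}$ are both nonzero by hypothesis, this function is non-constant and hence surjective onto $\bar{\mathbb C}$, so one can solve for $b_m$ realising the target $\bar\psi_{m-1}$; the value $\bar\psi_{m-1}=\infty$ is achieved by $b_m=-\dot s_{m,m+1}/\dot s_{m+1,m+1}$. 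A completely analogous computation for $L_m$ produces
\begin{equation*}
\frac{(\dot T L_m)_{m+1,m}}{(\dot S L_m)_{m+1,m}} \;=\; \frac{\dot t_{m+1,m}+a_m\,\dot t_{m+1,m+1}}{\dot s_{m+1,m}+a_m\,\dot s_{m+1,m+1}},
\end{equation*}
and the same argument supplies an $a_m$ realising any desired $\xi_m\in\bar{\mathbb C}$.

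The only delicate point I foresee is the non-constancy of the two Möbius transformations; surjectivity fails exactly when numerator and denominator are proportional as linear functions of the parameter, which under the nonvanishing of $\dot t_{m+1,m+1},\dot s_{m+1,m+1}$ would force $\dot t_{m,m+1}/\dot s_{m,m+1}=\dot t_{m+1,m+1}/\dot s_{m+1,m+1}$ (respectively $\dot t_{m+1,m}/\dot s_{m+1,m}=\dot t_{m+1,m+1}/\dot s_{m+1,m+1}$). I would dispose of this by noting that such a coincidence between an existing pole and the trailing $1\times 1$ sub-pencil would correspond precisely to the breakdown situation excluded earlier, so under the standing no-breakdown assumption the required $b_m$ and $a_m$ always exist and the lemma follows.
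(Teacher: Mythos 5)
This is essentially the paper's proof: the paper likewise reduces everything to the trailing $2\times 2$ blocks of $(R_m\dot{T}L_m,\,R_m\dot{S}L_m)$, where the $(m,m+1)$ entries depend only on $b_m$ and the $(m+1,m)$ entries only on $a_m$, and solves for the two parameters, so your M\"obius-transformation formulation is the same computation written out explicitly. Your closing caveat is in fact more cautious than the paper, which infers attainability of arbitrary ratios directly from $\dot{t}_{m+1,m+1},\dot{s}_{m+1,m+1}\neq 0$ without discussing the proportional (constant-M\"obius) case; your appeal to the no-breakdown assumption there is an added patch (and the earlier breakdown condition, the vanishing of $\hat{t}^{(i)}$ or $\hat{s}^{(i)}$, is not literally the same degeneracy), rather than a divergence from the paper's argument.
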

\begin{proof}
	Only the $2\times2$ trailing principal submatrices of $(\dot{T},\dot{S})$ are altered by the transformation $(R_m \dot{T} L_m, R_m \dot{S} L_m)$, denoted compactly by
	\begin{align*}
	\begin{bmatrix}
	1 & b_m \\
	& 1
	\end{bmatrix} \begin{bmatrix}
	\dot{t}_{m,m} & \dot{t}_{m,m+1}\\
	\dot{t}_{m+1,m} & \dot{t}_{m+1,m+1}
	\end{bmatrix}
	\begin{bmatrix}
	1 & \\
	a_m &1
	\end{bmatrix} &= \begin{bmatrix}
	\times & \alpha\\
	\mu &  \times
	\end{bmatrix}\\
	\begin{bmatrix}
	1 & b_m \\
	& 1
	\end{bmatrix} \begin{bmatrix}
	\dot{s}_{m,m} & \dot{s}_{m,m+1}\\
	\dot{s}_{m+1,m} & \dot{s}_{m+1,m+1}
	\end{bmatrix}
	\begin{bmatrix}
	1 & \\
	a_m &1
	\end{bmatrix} &= \begin{bmatrix}
	\times & \beta\\
	\nu &  \times
	\end{bmatrix}.
	\end{align*}
	The formation of the product on the left-hand side shows that, under the assumption $\dot{t}_{m+1,m+1}\neq 0$ and $\dot{s}_{m+1,m+1}\neq 0$, we can create any ratios $\alpha/\beta, \mu/\nu \in \bar{\mathbb{C}}$. \qed
\end{proof}

\subsection{Solving an inverse eigenvalue problem}\label{sec:Update_SolveIEP}
Updating procedures are also suitable to solve an entire inverse eigenvalue problem.
Start from the trivial solution to a one dimensional IEP, for $z_1\in\mathbb{C}$ and $Q,B,C\in\mathbb{C}^{1\times 1}$,
\begin{equation*}
z_1 Q C = Q B.
\end{equation*}
Clearly $Q=1, C=1$ and $B=z_1$ is a possible solution and updating this solution $m-1$ times to include all nodes and weights will provide a solution to the IEP, tridiagonal or Hessenberg pencil depending on the imposed structure.\\
The implementation of the updating procedures determines their performance. 
If implemented correctly, the updating procedure for the Hessenberg pencil is numerically stable.
The three parts composing updating procedures are enforcing the orthogonality condition, imposing the necessary structure of the pencil and introducing the new pole.
We discuss their numerical stability for the updating procedure for Hessenberg pencils.
Enforcing the orthogonality condition and introducing the new pole corresponds to multiplication with a plane rotation, this can be done numerically stable \cite{Wi63}.
Showing the stability of the procedure that imposes Hessenberg structure is more subtle.
In the proof of Lemma~\ref{lemma:Hess_pencil} it is apparent that a choice must be made whether to use $H$ or $K$ to compute the parameters of the plane rotation which will eliminate the elements on the last row. 
The criterion for this choice which leads to the numerically most stable implementation is, using the notation of the proof of Lemma~\ref{lemma:Hess_pencil},
\begin{equation*}
\text{compute } P_{i+1} \text{ from } 
\begin{cases}
\prod_{l=i}^{1} P_l \widehat{H} \prod_{k=1}^{i}\dot{P}_k, \quad \text{if } \frac{\eta}{\epsilon} < \frac{\delta}{\alpha}\\
\prod_{l=i}^{1} P_l \widehat{K} \prod_{k=1}^{i}\dot{P}_k, \quad \text{else} 
\end{cases}.
\end{equation*}
The elements in the last rows of the pencil, which $P_{i+1}$ should eliminate, are set explicitly to zero and this is numerically stable for the above criterion \cite{CaMaVaWa20}.\\
Since the annihilated elements in the last rows are set explicitly to zero, the resulting structure is exactly a Hessenberg pencil and therefore corresponds exactly to a structured matrix pencil containing recurrence coefficients of rational functions with prescribed poles.
From numerical experiments, however, it will become clear that the poles appearing in the pencil do suffer some accuracy loss.
More details are provided in Section~\ref{sec:numerics}.\\
The numerical analysis of the updating procedure for the tridiagonal pencil IEP is not studied here.
Biorthogonal methods require more components, a manner to deal with breakdowns and numerical breakdowns, great care in computing the eliminators and scalings to balance the order of magnitude between the matrices in the pencil.
This is outside of the scope of this manuscript.\\
We will present some numerical results for the procedures solving the tridiagonal pencil IEP in Section~\ref{sec:numerics}.
These serve as a proof of concept and show the advantage of working with a short recurrence relation, i.e., a tridiagonal instead of a Hessenberg pencil.
The implementation follows Section \ref{sec:TPIEP_Update} exactly and sets all annihilated elements explicitly to zero.
A criterion, as discussed above, is not used because theoretical background is lacking to make an adequate choice.

\section{Numerical analysis}\label{sec:numerics}
The proposed solution strategies, based on Krylov subspace methods and on the updating procedures, are analyzed numerically.
Consider the diagonal matrix of distinct nodes $Z\in\mathbb{C}^{m\times m}$, a weight vector $v\in \mathbb{C}^m$ and a set of poles $\Xi=\{\xi_i\}_{i=1}^{m-1}$, with $\xi_i\in {\mathbb{C}}\backslash \{0\}$ (the exclusion of $\xi_i = 0$ and $\xi_i = \infty$ is done solely for simplicity of notation).
In case of biorthogonality, an extra weight vector $w\in \mathbb{C}^m$ and set of poles $\varPsi$ is provided.
Both solution strategies compute a solution in the form of a pencil $(B,C)$ such that
\begin{equation*}
W^H Z V C = B,
\end{equation*}
with $W^H V = I$, $We_1 = \alpha w$, $Ve_1 = \beta v$ for some constants $\alpha,\beta$ and $(B,C)$ adhering to either Hessenberg or tridiagonal structure.
When computing this in finite precision some errors will arise and these are measured.
The biorthogonality of the formed bases $V,W$ is measured by 
\begin{equation*}
\textrm{err}_{\textrm{o}} := \Vert W^H V - I\Vert_2,
\end{equation*}
where $I$ is a unit matrix of appropriate size and if we consider an orthogonal basis $Q$, then $V=W=Q$.
The accuracy of the recurrence relation, consisting of recurrence matrices $(B,C)$ and basis $V$, is measured by 
\begin{equation*}
\textrm{err}_{\textrm{r}} := \frac{\Vert Z V C - V B\Vert _2}{\max\left(\Vert ZVC \Vert_2, \Vert VB \Vert_2 \right)}.
\end{equation*}
The elements $(B,C)$ represent recurrence coefficients of sequences of biorthogonal rational functions, $\{r_i\}_{i=0}^{m-1}$ and $\{s_i\}_{i=0}^{m-1}$, or a single sequence of orthogonal rational functions $\{r_i\}_{i=0}^{m-1}$. 
The orthogonality of these functions is checked by constructing their moment matrix, which should equal the unit matrix. 
We get for the orthogonal case with inner product $(.,.)$:
\begin{equation*}
\textrm{err}_{\textrm{f}} :=\left\Vert \begin{bmatrix}
(r_i,r_j)
\end{bmatrix}_{i,j=0}^{m-1} - I\right\Vert_2
\end{equation*}
and for the biorthogonal case with bilinear form $\langle.,.\rangle$:
\begin{equation*}
\textrm{err}_{\textrm{f}} :=\left\Vert \begin{bmatrix}
\langle r_i,s_j\rangle
\end{bmatrix}_{i,j=0}^{m-1} - I\right\Vert_2.
\end{equation*}	
The evaluation of the rational functions $\{r_i\}$ and $\{s_i\}$ is done by solving the system of equations obtained by truncating the last column of the matrix on the left-hand side and the last element of the vector on the right-hand side of
\begin{align*}
&\begin{bmatrix}
r_0(z) & r_1(z) & \cdots & r_{n-1}(z)
\end{bmatrix} \begin{bmatrix}
1  \\
0  \\
\vdots & & B-zC & & \\
0  \\
0  
\end{bmatrix} \\
& = \begin{bmatrix}
r_0(z) & 0 & \cdots & 0 & (zB_{n+1,n} - C_{n+1,n}) r_n(z)
\end{bmatrix},
\end{align*}
where $r_0(z)$ is known. 
The condition number of the resulting system, denoted by $\kappa(B,C)$, will be important for the numerical analysis.\\
The final error metric quantifies the accuracy of the poles. The poles of the computed pencil $(B,C)$ are compared to the given poles $\xi_i$,
\begin{equation*}
\textrm{err}_{\textrm{p}} = \max_{1\leq i\leq m-1}
\left\{\frac{\left\vert \frac{B(i+1,i)}{C(i+1,i)} - \xi_i\right\vert}{\vert \xi_i \vert}\right\}.
\end{equation*}
For the TPIEP, also the superdiagonal ratios reveal poles and must be taken into account, which is taken to be the maximum of the above metric and the following
\begin{equation*}
\textrm{err}_{\textrm{p}} = \max_{2\leq i\leq m-1}\left\{\frac{\left\vert \frac{B(i,i+1)}{C(i,i+1)} - \bar{\psi}_{i-1}\right\vert}{\vert \psi_{i-1} \vert}\right\}.
\end{equation*}
Throughout this section all weights are chosen to equal the value 1.
The numerical performance of the solution strategies for the HPIEP is analyzed in Section \ref{sec:HP_numerics} and for the TPIEP in Section \ref{sec:TP_numerics}.\\
Throughout the following discussion, it is important to be aware of the essential difference between both solution procedures, the updating procedure starts from an already known solution to construct the next solution, whereas the Krylov procedure must start over every time the problem changes.
Therefore, the updating procedure is much more efficient in situations where the solution to a related problem is available.
On the other hand, the Krylov procedure possesses all information about the whole problem, which typically leads to a more accurate solution.

\subsection{Hessenberg pencil}\label{sec:HP_numerics}
Two experiments are discussed. The first uses equidistant nodes on the unit circle and highlights the numerical stability of the proposed updating procedure. The second illustrates the influence of the given nodes on the accuracy of the numerical solution by choosing two nodes close to each other.\\
The first experiment uses equidistant nodes on the unit circle, since updating always adds one node and keeps all others fixed, we cannot have equidistant nodes at each step. 
The node is then added at the largest distance from all nodes already generated, exactly in the middle of two adjacent nodes.
This order of adding nodes is chosen because it is a good order for the updating procedure, the order has little effect on the final solution (for the same nodes) but strongly influences the intermediate behavior. \\
The poles $\Xi$ are chosen equidistant on a circle of radius 1.5.
The result for problem sizes $m=3,18,\dots,393$ is shown in Figure \ref{fig:caseUnitaryEquidistant}.
The three metrics for the matrix solution, $\textrm{err}_{\textrm{o}}$, $\textrm{err}_{\textrm{r}}$ and $\textrm{err}_{\textrm{p}}$, show very good accuracy for both procedures, with the Krylov procedure performing slightly better, which can be attributed to the benefit of solving the complete problem every time.
The metric for the orthogonality of the rational functions, $\textrm{err}_{\textrm{f}}$, shows that the updating procedure performs much better than the Krylov procedure.
\begin{figure}[!ht]
	\includegraphics{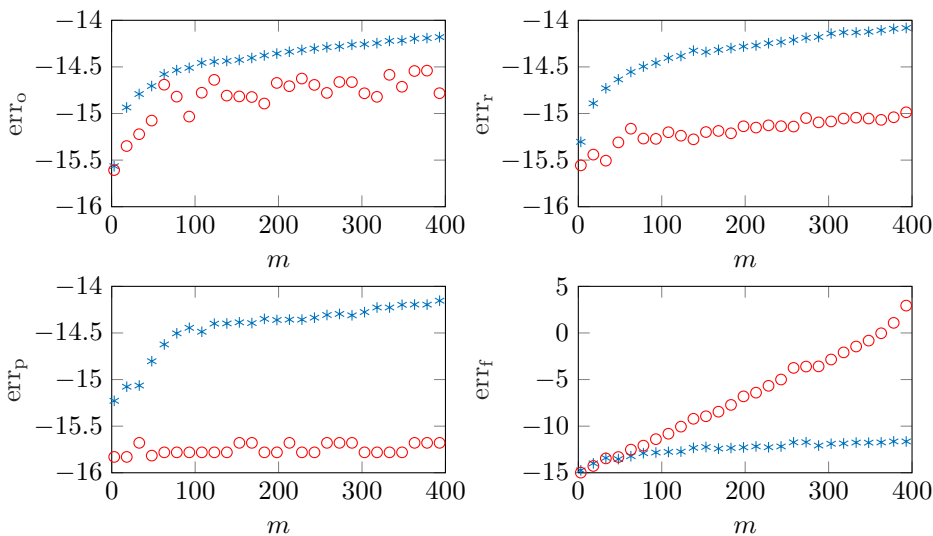}
	\caption{HPIEP with nodes on the unit circle and poles on a circle with radius 1.5. Error metrics for Krylov '$\circ$' and updating '*' procedure in $\log$ scale for problem size $m$.}
	\label{fig:caseUnitaryEquidistant}
\end{figure}

To explain this, we must look at the condition number $\kappa(B,C)$ for the pencil $(B,C)$ obtained by both procedures, shown in Table \ref{table:condition_HPIEP_unitary}.
This table shows that the condition of the system of equations is much larger for the pencil obtained by the Krylov procedure.
The updating procedure performs unitary similarity transformations, therefore if nodes are located on the unit circle then the pencil consists of unitary matrices and this leads to much better conditioning than the Krylov procedure, which does not generate unitary matrices. 
Note that the pencil $(B,C)$ as a whole is unitary in both cases.
\begin{table}[!ht]
	\centering
	\begin{tabular}{l|lllll}
		$m$    & 10  & 100  & 200 & 300 & 400  \\ \cline{1-6} 
		Update & 1.9e01 & 2.3e02  & 4.8e02  & 1.4e03  & 9.1e03\\
		Krylov & 2.9e01 & 2.8e05& 4.0e09 & 3.3e13 & 7.7e17
	\end{tabular}
	\caption{Condition number $\kappa(B,C)$ for the pencil $(B,C)$ obtained by the updating and Krylov procedure for the first experiment, with poles on a circle of radius 1.5, for problem size $m$.}
	\label{table:condition_HPIEP_unitary}
\end{table}
The condition number $\kappa(B,C)$ of the solution obtained by the Krylov procedure depends on the choice of poles, we repeat the above experiment with poles on a circle with radius 3. 
Table \ref{table:condition_HPIEP_unitary_2} shows a much smaller $\kappa(B,C)$ for the Krylov solution.
This illustrates the dependence of $\kappa(B,C)$ of the Krylov solution on the prescribed poles.
The updating solution is much less influenced by the choice of poles.
\begin{table}[!ht]
	\centering
	\begin{tabular}{l|lllll}
		$m$    & 10  & 100  & 200 & 300 & 400  \\ \cline{1-6} 
		Update & 2.0e01 & 2.2e02  & 4.4e02  & 1.4e03  & 9.8e02\\
		Krylov & 2.4e01 & 3.2e02& 6.1e02 & 1.8e03 & 1.4e03
	\end{tabular}
	\caption{Condition number $\kappa(B,C)$ for the pencil $(B,C)$ obtained by the updating and Krylov procedure for the first experiment, with poles on a circle of radius 3, and problem size $m$.}
	\label{table:condition_HPIEP_unitary_2}
\end{table}	

The second experiment shows how the numerical solution of a HPIEP depends on the location of the given nodes.
The nodes are chosen as above, up to the following change, the $m_p$th node is chosen on the circle and close to the $(m_p-1)$th node.
That is, for $m\geq m_p$ nodes, we have $m-1$ equidistant on the circle as above, and a node close to one of these nodes, the distance between these two nodes is given by the angle $\theta$.
For small $\theta$ this leads to an underlying discrete inner product with $m$ nodes that is very close to an inner product with $m-1$ nodes.
Therefore, the $m$th orthogonal rational function will become closer to numerical linear dependence, as $\theta$ gets smaller, with a deterioration of the orthonormality of the generated rational functions as a consequence.
Figure \ref{fig:caseUnitaryEquidistant_pert} shows the results for $m_p=50$, $\theta = 10^{-6}$ and equidistant poles on a circle of radius 3.
The metrics $\textrm{err}_{\textrm{o}}$ and $\textrm{err}_{\textrm{r}}$ behave nicely, as with the first experiment.
The error on the poles obtained by the updating procedure, $\textrm{err}_{\textrm{p}}$, makes a jump when a value $m>m_p$ is reached and stagnates thereafter.
This jump is caused by the small value obtained for the last element in the Hessenberg pencil, this element is the inner product of $r_{m_p-1}(z)$ with $r_{m_p-2}(z)$ which is small due to the similarity of the inner product with $m_p$ and $m_p-1$ nodes.
And this element is used to introduce the new pole, but due to the difference in order of magnitude (about the size of $\theta$), loss of accuracy (about 5-6 digits) is expected.
As predicted, the orthonormality, measured by $\textrm{err}_{\textrm{f}}$, of the complete set of the $m$ orthogonal rational functions deteriorates for $m>m_p$.
However, if we look at the $m-1$ first orthogonal rational functions, the situation is much better. 
This means that the loss of orthogonality, as expected by the closeness of an inner product of $m$ and $m-1$ nodes, is isolated in the $m$th ORF. 
Hence, the first $m-1$ ORFs are still accurately computed.

\begin{figure}[!ht]
	\includegraphics{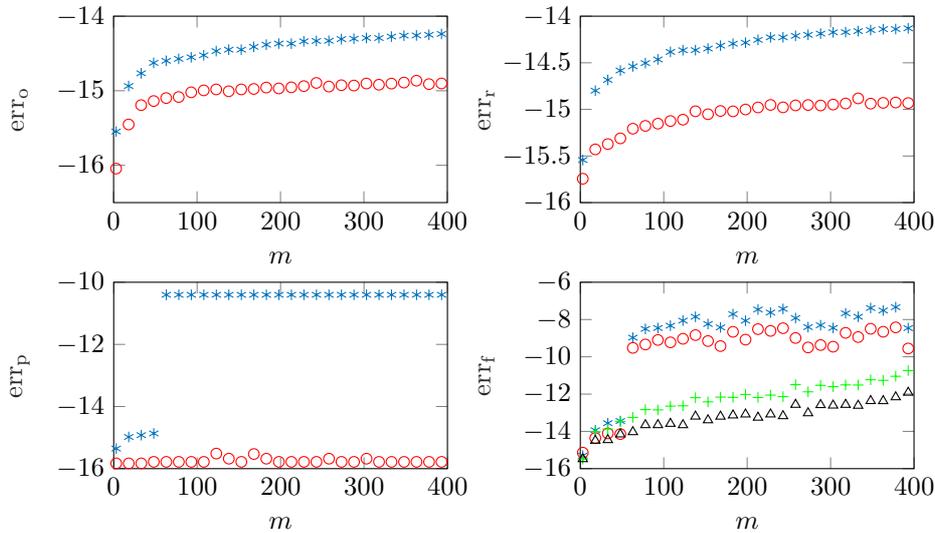}
	\caption{HPIEP with nodes on the unit circle, perturbation of $10^{-6}$ at $m=50$ and poles on a circle with radius 3. Error metrics for Krylov '$\circ$' and updating '*' procedure in $\log$ scale for problem size $m$. The metric $\textrm{err}_\textrm{f}$ for the $m-1$ first ORFs is indicated by '+' for Krylov and '$\triangle$' for updating.}
	\label{fig:caseUnitaryEquidistant_pert}
\end{figure}

\subsection{Tridiagonal pencil}\label{sec:TP_numerics}
The tridiagonal pencil is interesting because of the underlying short recurrence relation. 
If this can be combined with an inner product, which has preferred numerical properties over a general bilinear form, then it will lead to an efficient procedure to generate ORFs in a stable manner.
This scenario occurs for nodes on the real line and poles $\xi_i = \bar{\psi}_i$.
The first experiment chooses Chebyshev nodes, obtained by projecting the nodes of the equidistant unit circle from Section \ref{sec:HP_numerics} onto the real line.
The second experiment serves as a proof of concept, the nodes are chosen equidistant on a thin ellipse, a choice between the unit circle and Chebyshev nodes, where Chebyshev nodes would be the limit case (such an ellipse of height zero).
For both experiments all weights equal the value 1, $v_i=w_i=1$, for all $i$.\\
The metrics for the first experiment, with Chebyshev nodes on the interval $[-1,1]$ and equidistant poles on a circle of radius 3, is provided in Figure \ref{fig:caseHermitianTP}.
As expected from using nonunitary similarity transformations, the procedure is no longer numerically stable, we see a steady deterioration of all metrics.
The errors are however still relatively small, especially for $\textrm{err}_\textrm{f}$. 
Table \ref{table:compare_HP_TP} shows this metric for solutions obtained by the solution procedures for the HPIEP and TPIEP, which are equivalent except for the imposed structure on the pencil.
The Hessenberg pencil achieves only one significant digit more than the tridiagonal pencil, which makes the procedures based on the biorthogonal formulation competitive thanks to the efficiency gained by the underlying short recurrence relation.

\begin{figure}[!ht]
	\includegraphics{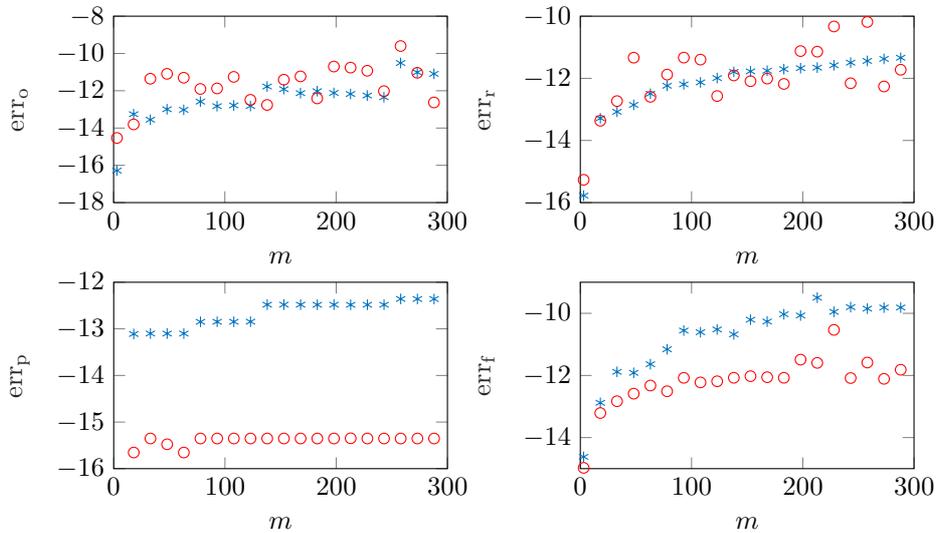}
	\caption{TPIEP with Chebyshev nodes on the interval $[-1,1]$ and equidistant poles on a circle with radius 3. Error metrics for Krylov '$\circ$' and updating '*' procedure in $\log$ scale for problem size $m$.}
	\label{fig:caseHermitianTP}
\end{figure}

\begin{table}[!ht]
	\centering
	\begin{tabular}{l|llll}
		m         & \multicolumn{1}{c}{18} & \multicolumn{1}{c}{93} & \multicolumn{1}{c}{198} & \multicolumn{1}{c}{288} \\ \hline
		Update TP & e-12.9   & e-10.6    &  e-10.1   &  e-9.8   \\
		Krylov TP & e-13.2   &  e-12.1   &  e-11.5   & e-11.8    \\ \hline
		Update HP &  e-13.5  &  e-11.5   &  e-10.6   &  e-10.4   \\
		Krylov HP & e-13.6   &  e-12   &  e-12   &    e-11.9
	\end{tabular}
	\caption{Metric $\textrm{err}_\textrm{f}$ for solution of the IEP with Chebyshev nodes in $[-1,1]$ and poles equidistant on circle with radius 3. Solutions are obtained by solving the HPIEP with updating and Krylov procedure and  by solving the equivalent TPIEP with its respective updating and Krylov procedure.}
	\label{table:compare_HP_TP}
\end{table}

The second experiment uses equidistant nodes on a thin ellipse $x^2+(y/0.01)^2 = 1$, obtained by compressing the unit circle $x^2 + y^2 = 1$ from Section \ref{sec:HP_numerics} in height.
The poles $\Xi$ are chosen equidistant on a circle of radius 3 and $\varPsi$ on a circle of radius 4.
Results of this experiment are shown in Figure \ref{fig:proofOfConcept}.
The updating procedure performs well despite its biorthogonal nature.
The solution obtained by the Krylov procedure deteriorates fast, this can be explained by the fact that it is a general purpose Lanczos-like recurrence relation, whereas the updating procedure is designed for this specific problem.
For poles which are equal, the situation improves, as is shown by the metrics in Figure \ref{fig:proofOfConcept_equalPoles}, where $\Xi=\varPsi$.\\

These results show the potential of the biorthogonal procedures, especially for special cases such as discussed in the first experiment.

\begin{figure}[!ht]
	\includegraphics{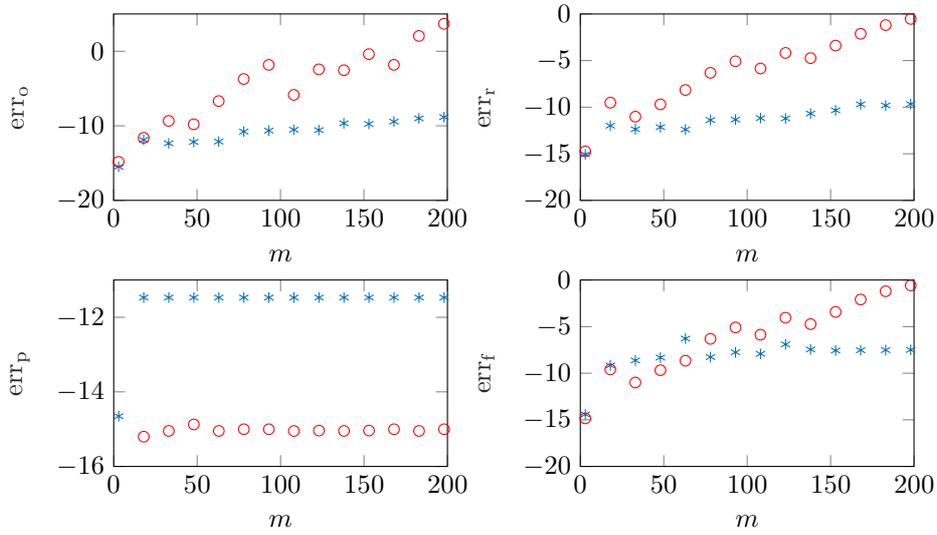}
	\caption{TPIEP with equidistant nodes on an ellipse $x^2 + (y/0.01)^2 = 1$ and poles $\Xi$ and $\varPsi$ on a circle with radius 3 and 4, respectively. Error metrics for Krylov '$\circ$' and updating '*' procedure in $\log$ scale for problem size $m$.}
	\label{fig:proofOfConcept}
\end{figure}

%\begin{figure}[!ht]
%	\includegraphics{../../RationalIEP_v1.2-figure3.eps}
%	\caption{TPIEP with equidistant nodes on an ellipse $x^2 + (y/0.01)^2 = 1$ and poles $\Xi$ and $\varPsi$ on a circle with radius 3 and 4, respectively. Error metrics for Krylov '$\circ$' and updating '*' procedure in $\log$ scale for problem size $m$.}
%	\label{fig:proofOfConcept}
%\end{figure}

\begin{figure}[!ht]
	\includegraphics{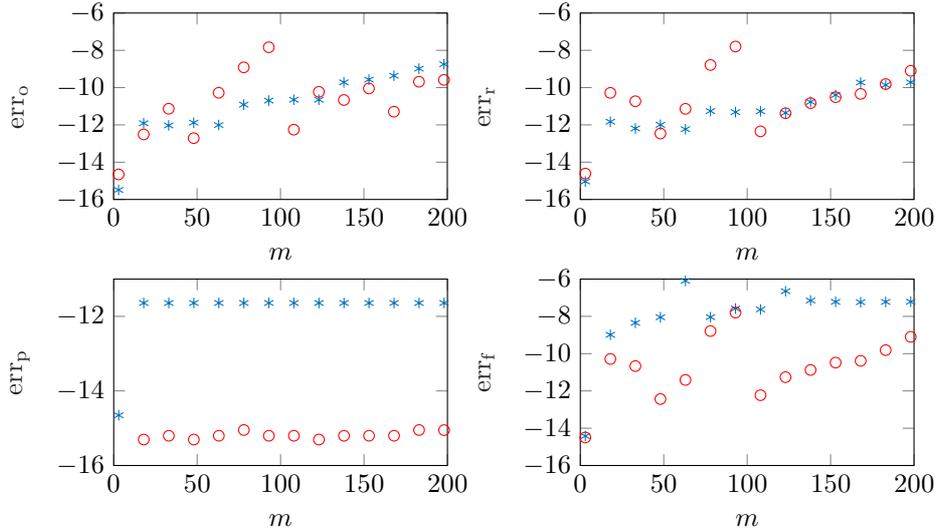}
	\caption{TPIEP with equidistant nodes on an ellipse $x^2 + (y/0.01)^2 = 1$ and poles $\Xi=\varPsi$ on a circle with radius 3. Error metrics for Krylov '$\circ$' and updating '*' procedure in $\log$ scale for problem size $m$.}
	\label{fig:proofOfConcept_equalPoles}
\end{figure}

\section{Conclusion}
The problem of generating sequences of biorthogonal rational functions with prescribed poles is formulated as a problem in numerical linear algebra.
For this so-called inverse eigenvalue problem two solution procedures are proposed, one based on Krylov subspace methods and the other an updating procedure which allows reusing already known solutions.
The updating procedure is numerically stable in case of a sequence of orthogonal rational functions.
The related rational functions retain their orthogonality very well up to modest degree.
The updating procedure has two advantages over the Krylov procedure: by using only unitary similarity transformation it obtains a pencil which consists of better conditioned matrices and it can build further on available solutions without the need to recompute.
Biorthogonal rational functions can be constructed with a short recurrence relation, via an inverse eigenvalue problem for a tridiagonal pencil.
The procedures to compute the recurrence coefficients might, however, suffer from numerical instability.
A trade-off between accuracy and efficiency must therefore be made when choosing either the Hessenberg or tridiagonal pencil formulation.
Subject of future research is to improve the numerical properties of the biorthogonal procedures, since they show potential, especially in special cases such as all nodes located on the real line.

\subsection*{Acknowledgements}
This is a preprint of an article published in Numerical Algorithms. The final authenticated version is available online at: https://doi.org/10.1007/s11075-021-01125-6

\bibliographystyle{siam}
\bibliography{references}  % name your BibTeX data base

\end{document}